\newfont{\bcb}{msbm10}
\newfont{\matb}{cmbx10}
\newfont{\got}{eufm10}
\newtheorem{theorem}{Theorem}[section]
\newtheorem{lemma}[theorem]{Lemma}
\newtheorem{proposition}[theorem]{Proposition}
\newtheorem{corollary}[theorem]{Corollary}
\theoremstyle{definition}
\theoremstyle{remark}
\newtheorem{remark}[theorem]{Remark}
\numberwithin{equation}{section}
\begin{document}

\title[Geometry over fields with analytic structure]{Some results
       of geometry over \\
       Henselian fields with analytic structure}

\author[Krzysztof Jan Nowak]{Krzysztof Jan Nowak}


\subjclass[2000]{Primary 32P05, 32B05, 32B20, 14G27; Secondary
03C10, 32S45, 12J25, 14P15.}

\keywords{Henselian valued fields, closedness theorem, separated
power series, strictly convergent power series, Weierstrass
system, analytic structure, b-minimality, cell decomposition,
quantifier elimination, fiber shrinking, \L{}ojasiewicz
inequalities, piecewise continuity, H\"{o}lder continuity,
transformation to a normal crossing, curve selection}

\date{}

\begin{abstract}
The paper deals with Henselian valued fields (of
equi\-characteristic zero) with analytic structure. Actually, we
are focused on separated analytic structures, but the results
remain valid for strictly convergent analytic ones as well. A
classical example of the latter is a complete rank one valued
fields with the Tate algebra of strictly convergent power series.
The algebraic case was treated in our previous papers. Here we are
going to carry over the research to the general analytic settings.
Also considered are the local rings of analytic function germs
induced by a given separated Weierstrass system, which turn out to
be excellent and regular. Several results are established as, for
instance, piecewise continuity of definable functions, several
versions of the \L{}ojasiewicz inequality, H\"{o}lder continuity
of definable functions continuous on closed bounded subsets of the
affine space and curve selection for definable sets. Likewise as
before, at the center of our approach is the closedness theorem to
the effect that every projection with closed bounded fiber is a
definably closed map. It enables application of resolution of
singularities and of transformation to a normal crossing by
blowing up (here applied to the local rings of analytic function
germs) in much the same way as over locally compact ground fields.
We rely on elimination of valued field quantifiers, term structure
of definable functions and b-minimal cell decomposition, due to
Cluckers--Lipshitz--Robinson, as well as on relative quantifier
elimination for ordered abelian groups, due to
Cluckers--Halupczok. Besides, other two ingredients of the proof
of the closedness theorem are existence of the limit (after finite
partitioning of the domain) for a definable function of one
variable and fiber shrinking, being a relaxed version of curve
selection.
\end{abstract}

\maketitle

\section{Introduction}

The paper deals with Henselian valued fields of equicharacteristic
zero with analytic structure. We are focused on separated analytic
structures, whose theory is briefly recalled in Section~2.
However, the results established here remain valid in that of
strictly convergent analytic structures, because every such a
structure can be extended in a definitional way (extension by
Henselian functions) to a separated analytic structure
(cf.~\cite{C-Lip}). Complete, rank one valued fields with the Tate
algebra of strictly convergent power series are a classical
example. Geometry over Henselian valued fields in the algebraic
case was treated in our previous articles~\cite{Now-Sel,Now-thm}.
We are now going to carry over the research to the general
analytic settings.

\vspace{1ex}

Throughout the paper, we shall usually assume that the ground
valued field $K$ is of equicharacteristic zero, not necessarily
algebraically closed. Denote by $v$, $\Gamma = \Gamma_{K}$,
$K^{\circ}$, $K^{\circ \circ}$ and $\widetilde{K}$ the valuation,
its value group, the valuation ring, maximal ideal and residue
field, respectively. The multiplicative norm corresponding to $v$
will be denoted by $| \cdot |$. By the $K$-topology on $K^{n}$ we
mean the topology induced by the valuation $v$. As before, at the
center of our approach is the following closedness theorem

\begin{theorem}\label{clo-th}
Let $K$ be a Henselian valued field with separated analytic
structure in the analytic language $\mathcal{L}$. Given an
$\mathcal{L}$-definable subset $D$ of $K^{n}$, the canonical
projection
$$ \pi: D \times (K^{\circ})^{m} \longrightarrow D  $$
is definably closed in the $K$-topology, i.e.\ if $B \subset D
\times (K^{\circ})^{m}$ is an $\mathcal{L}$-definable closed
subset, so is its image $\pi(B) \subset D$.
\end{theorem}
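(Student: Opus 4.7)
The plan is to reduce to the case $m = 1$ by induction on $m$, since the projection $D \times (K^{\circ})^{m} \to D$ factors as $D \times (K^{\circ})^{m} \to D \times (K^{\circ})^{m-1} \to D$, and the intermediate image of a closed definable set is closed and definable once the base case is settled. It thus suffices to prove the following: if $B \subset D \times K^{\circ}$ is $\mathcal{L}$-definable and closed in the $K$-topology, and $a \in \overline{\pi(B)}$, then $a \in \pi(B)$.

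First I would pick such a boundary point $a$ and aim to exhibit a definable arc in $B$ whose $\pi$-image accumulates to $a$. The vehicle is \emph{fiber shrinking}, the relaxed curve selection singled out in the abstract: since $a$ lies in the closure of the $\mathcal{L}$-definable set $\pi(B) \subset K^{n}$, fiber shrinking produces a definable subset $A \subset \pi(B)$, accumulating to $a$, of suitable one-dimensional nature near $a$. Combining this with the term structure of definable functions and the $b$-minimal cell decomposition of Cluckers--Lipshitz--Robinson (applied to a definable selection $D \supset A \ni x \mapsto (x, \psi(x)) \in B$), I would obtain a definable parametrization $\varphi = (\varphi_{1}, \varphi_{2}) : E \to B$ from a one-dimensional $\mathcal{L}$-definable domain $E$ (a subset of $K$, or, after relative quantifier elimination for $\Gamma$ \`a la Cluckers--Halupczok, of the value group) such that $\varphi_{1}(e) \to a$ along the relevant part of $E$.

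Next I would invoke the second ingredient flagged in the abstract: after finite partitioning of $E$ one may assume that the one-variable definable function $\varphi_{2}: E \to K^{\circ}$ has a limit $b$ along the appropriate end of $E$. Since $K^{\circ}$ is closed and bounded in $K$, one has $b \in K^{\circ}$. Consequently $\varphi(e) \to (a, b)$, so $(a, b) \in \overline{B} = B$, whence $a = \pi(a, b) \in \pi(B)$, as required.

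The main obstacle is establishing the two underlying analytic ingredients — fiber shrinking and existence of the limit for a definable function of one variable — in the present setting, where, unlike the algebraic case treated in \cite{Now-thm,Now-Sel}, one no longer has algebraic cell decomposition at one's disposal. Both must instead be derived from the term structure of $\mathcal{L}$-definable functions (separated power series composed with Henselian functions), from $b$-minimal cell decomposition and elimination of valued-field quantifiers due to Cluckers--Lipshitz--Robinson, and from the relative quantifier elimination of Cluckers--Halupczok in order to control the cell parameters living in $\Gamma$. A secondary subtlety is that $K$ is not assumed algebraically closed or maximally complete, so the limit analysis must be carried out coordinatewise, separating the $K$-valued and $\Gamma$-valued parts of the cell decomposition before combining them into an actual limit point of $B$.
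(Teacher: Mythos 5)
Your plan is essentially the paper's plan at the level of ingredients (reduce to $m=1$, fiber shrinking, b-minimal cell decomposition with term structure, relative quantifier elimination for $\Gamma$, and existence of the limit for one-variable definable functions), but there is one genuine gap that the paper is specifically at pains to close: the ``definable selection'' $A \ni x \mapsto (x,\psi(x)) \in B$ that you take for granted. In this setting definable Skolem functions are not generally available --- after b-minimal cell decomposition the fiber of $B$ over a base point $x$ can be a ball $\{y : rv(y-\zeta(x))=\rho(x)\}$ around a definable center $\zeta(x)$, and $\zeta(x)$ itself does \emph{not} lie in that fiber, nor is there a canonical definable lift of the $RV$-value $\rho(x)$ back to $K$. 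So one cannot simply pick a $\psi$ and then apply the limit theorem.

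The paper works around this by keeping the cell data ($\zeta$, $\rho$) explicit rather than collapsing it into a selection. After fiber shrinking reduces to $n=1$ and orthogonality of $\widetilde{K}$ and $\Gamma$ normalizes angular components, it applies Lemma~\ref{line-2} twice to force both the auxiliary coordinate $\lambda$ and the $RV$-radius $\rho$ onto affine lines in $\Gamma$ with integer slope; in the ball case this yields a concrete parametrized arc $t \mapsto \bigl(ct^{qN},\, \zeta(ct^{qN}) + ct^{-pN}\bigr)$ that provably lands inside $B$, and only then does Proposition~\ref{limit-th1} finish the job. The point-cell case is where your $\psi$ does exist (it is $\zeta$) and the argument is as you describe. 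So the missing step in your sketch is precisely this case split plus the affine-line reduction that makes an explicit curve in the ball case possible; with that supplied, your argument becomes the paper's. One smaller caveat: be sure to use only the weak limit statement (a definable subset $F\subset E$ with accumulation point $0$, Proposition~\ref{limit-th1}), not the full partition version (Proposition~\ref{limit-th2}), since the latter is itself deduced from the closedness theorem and would make the argument circular.
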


\vspace{1ex}

We immediately obtain two consequences.

\begin{corollary}\label{clo-th-cor-0}
Let $D$ be an $\mathcal{L}$-definable subset of $K^{n}$ and
$\,\mathbb{P}^{m}(K)$ stand for the projective space of dimension
$m$ over $K$. Then the canonical projection
$$ \pi: D \times \mathbb{P}^{m}(K) \longrightarrow D $$
is definably closed. \hspace*{\fill} $\Box$
\end{corollary}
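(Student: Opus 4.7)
The plan is to reduce the corollary to Theorem~\ref{clo-th} by exploiting a feature specific to the non-archimedean setting: unlike real or complex projective space, $\mathbb{P}^{m}(K)$ admits a finite $\mathcal{L}$-definable \emph{closed} cover by pieces each definably homeomorphic to the closed polydisc $(K^{\circ})^{m}$. No local compactness of $K$ is required, which is precisely why the corollary will drop out of the closedness theorem without additional technology.

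Concretely, for $i = 0, \ldots, m$ I would introduce
$$ V_{i} := \{ [x_{0} : \ldots : x_{m}] \in \mathbb{P}^{m}(K) : |x_{j}| \leq |x_{i}| \text{ for every } j = 0, \ldots, m \}, $$
so that $\mathbb{P}^{m}(K) = V_{0} \cup \ldots \cup V_{m}$. Each $V_{i}$ is $\mathcal{L}$-definable and closed in the quotient topology on $\mathbb{P}^{m}(K)$ (its preimage in $K^{m+1} \setminus \{0\}$ is cut out by non-strict valuation inequalities). Normalising the $i$-th coordinate to $1$ yields an explicit definable homeomorphism $\varphi_{i} : V_{i} \xrightarrow{\sim} (K^{\circ})^{m}$, whose inverse sends $(y_{0}, \ldots, \widehat{y_{i}}, \ldots, y_{m}) \in (K^{\circ})^{m}$ to $[y_{0} : \ldots : 1 : \ldots : y_{m}]$.

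Given a closed $\mathcal{L}$-definable set $B \subset D \times \mathbb{P}^{m}(K)$, I would set $B_{i} := B \cap (D \times V_{i})$. Since $V_{i}$ is closed, each $B_{i}$ is closed in $D \times V_{i}$ and hence, via $\mathrm{id}_{D} \times \varphi_{i}$, corresponds to a closed $\mathcal{L}$-definable subset of $D \times (K^{\circ})^{m}$. Applying Theorem~\ref{clo-th} to each of these finitely many sets shows that $\pi(B_{i}) \subset D$ is closed, and consequently
$$ \pi(B) \;=\; \bigcup_{i=0}^{m} \pi(B_{i}) $$
is a finite union of closed sets, hence closed.

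I do not foresee a genuine obstacle: the only point demanding care is checking that the charts $V_{i}$ are definable, closed, and definably homeomorphic to $(K^{\circ})^{m}$, which is elementary. All the substance lives in Theorem~\ref{clo-th}; the corollary is a formal packaging that replaces the usual open-affine cover of $\mathbb{P}^{m}$ (convenient over archimedean fields) by a closed-polydisc cover available over any valued field.
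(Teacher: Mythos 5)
Your proof is correct, and it is exactly the (implicit) argument the paper has in mind: the paper leaves the corollary unproved precisely because the standard closed cover of $\mathbb{P}^{m}(K)$ by the definable ``affinoid'' charts $V_{i}$, each definably homeomorphic to $(K^{\circ})^{m}$, reduces it immediately to Theorem~\ref{clo-th}. Nothing is missing; the checks you flag (definability and closedness of $V_{i}$, and that $\varphi_{i}$ is a definable homeomorphism) are indeed elementary and go through as you say.
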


\begin{corollary}\label{clo-th-cor-1}
Let $A$ be a closed $\mathcal{L}$-definable subset of
$\,\mathbb{P}^{m}(K)$ or of $(K^{\circ})^{m}$. Then every
continuous $\mathcal{L}$-definable map $f: A \to K^{n}$ is
definably closed in the $K$-topology. \hspace*{\fill} $\Box$
\end{corollary}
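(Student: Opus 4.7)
The plan is to reduce the assertion to Theorem~\ref{clo-th} and Corollary~\ref{clo-th-cor-0} by passing to the graph of $f$. Let $F \subseteq A$ be an arbitrary closed $\mathcal{L}$-definable subset; the goal is to show that $f(F) \subseteq K^{n}$ is closed.

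First I would form the restricted graph
\[ \Gamma := \{ (x, f(x)) : x \in F \} \subseteq A \times K^{n}, \]
which is manifestly $\mathcal{L}$-definable. The key preliminary step is to verify that $\Gamma$ is closed in the ambient product $\mathbb{P}^{m}(K) \times K^{n}$ (respectively $(K^{\circ})^{m} \times K^{n}$). This follows from continuity of $f$: the set $\Gamma$ is the preimage of the diagonal $\Delta \subseteq K^{n} \times K^{n}$ under the continuous map $F \times K^{n} \to K^{n} \times K^{n}$, $(x,y) \mapsto (f(x), y)$. Since the $K$-topology on $K^{n}$ is Hausdorff, $\Delta$ is closed, so $\Gamma$ is closed in $F \times K^{n}$. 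Because $F$ is assumed closed in the ambient space, $\Gamma$ is in fact closed in the full ambient product.

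Next, $f(F)$ is the image of $\Gamma$ under the projection $\pi$ onto the $K^{n}$-factor. In the affine case $A \subseteq (K^{\circ})^{m}$ I would apply Theorem~\ref{clo-th} with $D := K^{n}$ (after swapping the two factors), which asserts precisely that $\pi$ is definably closed; hence $f(F) = \pi(\Gamma)$ is closed in $K^{n}$. In the projective case $A \subseteq \mathbb{P}^{m}(K)$ the same argument goes through verbatim, invoking Corollary~\ref{clo-th-cor-0} with $D := K^{n}$ in place of the closedness theorem.

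I do not anticipate any genuine obstacle: the closedness theorem has been tailored so that this corollary is essentially immediate once one recognises the image as the projection of the graph. The only subtlety worth flagging is the closedness of $\Gamma$ in the ambient product, where both continuity of $f$ and the Hausdorff property of the $K$-topology enter, together with the hypothesis that $A$ itself is closed in $\mathbb{P}^{m}(K)$ or in $(K^{\circ})^{m}$.
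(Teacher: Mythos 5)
Your proof is correct and is essentially the argument the paper has in mind (the corollary is stated as an immediate consequence, without a written proof): pass to the graph $\Gamma$ of $f|_F$, note it is closed in the ambient product using continuity of $f$, Hausdorffness of the $K$-topology, and closedness of $A$ (hence of $F$) in $\mathbb{P}^m(K)$ or $(K^\circ)^m$, and then invoke Theorem~\ref{clo-th} or Corollary~\ref{clo-th-cor-0} with $D=K^n$ to conclude that the projection $\pi(\Gamma)=f(F)$ is closed.
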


Theorem~\ref{clo-th} will be proven in Section~4. The strategy of
proof in the analytic settings will generally follow the one in
the algebraic case from our papers~\cite{Now-Sel,Now-thm}. Here we
apply elimination of valued field quantifiers for the theory
$T_{Hen,\mathcal{A}}$ along with b-minimal cell decompositions
with centers (Theorem~\ref{ball}) and term structure of definable
functions (Theorem~\ref{term}), both the results due to
Cluckers--Lipshitz--Robinson~\cite{C-Lip-R,C-Lip-0}, as well as
relative quantifier elimination for ordered abelian groups (in a
many-sorted language with imaginary auxiliary sorts), due to
Cluckers--Halupczok~\cite{C-H}. Besides, in the proof of the
closedness theorem, we rely on the local behavior of definable
functions of one variable and on fiber shrinking, being a relaxed
version of curve selection.

\vspace{1ex}

Majority of the results in the subsequent sections rely on the
closedness theorem. It enables, in particular, application of
resolution of singularities and of transformation to a normal
crossing by blowing up in much the same way as over locally
compact ground fields.

\vspace{1ex}

Note that non-Archimedean analytic geometry over Henselian valued
fields has a long history (see e.g.\
\cite{De-Dries,Dries,Lip,Dries-Mac,Dries-Has,L-R-0,L-R,C-Lip-R,C-Lip-0,C-Lip}).
The concept of fields with analytic structure, introduced by
Cluckers--Lipshitz--Robinson~\cite{C-Lip-R}, will be recalled for
the reader's convenience in Section~2 following the last two
papers listed above. In Section~3, we consider the local rings of
analytic functions germs $A_{p}^{loc}(K)$ at $0 \in K^{p}$, $p \in
\mathbb{N}$, induced by a given separated Weierstrass system. They
turn out to be excellent regular local rings.

\vspace{1ex}

In Section~5, we give two direct applications of the closedness
theorem, namely theorems on existence of the limit
(Proposition~\ref{limit-th2}) and on piecewise continuity
(Theorem~\ref{piece}). Note that our proof of the closedness
theorem makes use of a certain version of the former result
(Proposition~\ref{limit-th1}).

\vspace{1ex}

Section~6 contains several versions of the \L{}ojasiewicz
inequality with an immediate consequence, H\"{o}lder continuity of
definable functions continuous on closed bounded subsets of
$K^{n}$. We only state the results, because the proofs of the
algebraic versions from our papers~\cite{Now-Sel,Now-thm} can be
repeated almost verbatim.

\vspace{1ex}

Finally, in the last section, we establish a general version of
curve selection for definable sets. It differs from the classical
one in that the domain of the selected curve is only a definable
subset of the unit disk. Its proof relies on the closedness
theorem, transformation to a normal crossing by blowing up
(applied to excellent regular local rings $A_{p}^{loc}(K)$ of
analytic function germs examined in Section~3), elimination of
valued field quantifiers, relative quantifier elimination for
ordered abelian groups, and a result from piecewise linear
geometry to which fiber shrinking comes down.



\section{Fields with analytic structure}
In this section, we remind the reader, following the
paper~\cite{C-Lip-0}, the concept of a separated analytic
structure.

\vspace{1ex}

Let $A$ be a commutative ring with unit and with a fixed proper
ideal $I \varsubsetneq A$; put $\widetilde{A} = A/I$. A {\em
separated} $(A, I)$-{\em system} is a certain system $\mathcal{A}$
of $A$-subalgebras $A_{m,n} \subset A[[\xi,\rho]]$, $m,n \in
\mathbb{N}$; here $A_{0,0} = A$ (op.~cit., Section~4.1). Two kinds
of variables, $\xi = (\xi_{1}, \ldots,\xi_{m})$ and $\rho =
(\rho_{1}, \ldots, \rho_{n})$, play different roles. Roughly
speaking, the variables $\xi$ vary over the valuation ring (or the
closed unit disc) $K^{\circ}$ of a valued field $K$, and the
variables $\rho$ vary over the maximal ideal (or the open unit
disc) $K^{\circ \circ}$ of $K$.

\vspace{1ex}

For a power series $f \in A[[ \xi, \rho ]]$, we say that

\vspace{1ex}

1) $f$ is $\xi_{m}$-{\em regular of degree} $d$ if $f$ is
congruent to a monic polynomial in $\xi_{m}$ of degree $d$ modulo
the ideal
$$ I[[ \xi,\rho ]] + (\rho) A[[ \xi,\rho ]] ; $$

2) $f$ is $\rho_{n}$-{\em regular of degree} $d$ if $f$ is
congruent to $\rho_{n}^{d}$ modulo the ideal
$$ I[[ \xi,\rho ]] + (\rho_{1},\ldots,\rho_{n-1},\rho_{n}^{d+1}) A[[ \xi,\rho ]] . $$

The $(A, I)$-system $\mathcal{A}$ is called a {\em separated
pre-Weierstrass system} if two usual Weierstrass division theorems
hold with respect to division by each $f \in A_{m,n}$ which is
$\xi_{m}$-regular or $\rho_{n}$-regular. By Weierstrass division,
units of $A_{m,n}$, being power series regular of degree $0$, are
precisely elements of the form $c + g$, where $c$ is a unit of $A$
and
$$ g \in A_{m,n}^{\circ} := (I,\rho) \, A_{m,n}. $$

\vspace{1ex}

Also introduced is the concept of rings $C$ of $\mathcal{A}$-{\em
fractions} with proper ideal $C^{\circ}$ (where $C^{\circ} := I$
if $C=A$) and with rings $C_{mn}$ of separated power series over
$C$; put $C_{m,n}^{\circ} := (C^{\circ},\rho)C_{m,n}$.

\vspace{1ex}

A pre-Weierstrass system $\mathcal{A}$ is called a {\em separated
Weierstrass system} if the rings $C$ of fractions enjoy the
following weak Noetherian property: If
$$ f = \sum_{\mu,\nu} \, c_{\mu,\nu} \xi^{\mu}\rho^{\nu} \in
   C_{m,n} \ \ \text{with} \ \ c_{\mu,\nu} \in C, $$
then there exist a finite set $J \subset \mathbb{N}^{m+n}$ and
elements $g_{\mu,\nu} \in C_{m,n}^{\circ}$, \ $(\mu,\nu) \in J$,
such that
$$ f = \sum_{(\mu,\nu) \in J} \, c_{\mu,\nu}\,
   \xi^{\mu} \, \rho^{\nu} \, (1+ g_{\mu,\nu}). $$
The above condition is a form of Noetherianity and implies, in
particular, that if
$$ f = \sum_{\mu,\nu} \, a_{\mu \nu} \, \xi^{\mu} \rho^{\nu} \in
   A_{m,n}, $$
then all the coefficients $a_{\mu \nu}$ are linear combinations of
finitely many of them and, moreover, if a coefficient $a_{\mu
\nu}$ is "small", so can be the coefficients of such a
combination. Consequently, the Gauss norm on each $A_{m,n}$ is
defined whenever $A = F^{\circ}$ and $I = F^{\circ \circ}$ for a
valued field $F$. Moreover, then the weak Noetherian property is
equivalent to the condition that for every $f \in A_{m,n}$, $f
\neq 0$, there is an element $c \in F$ such that $cf \in A_{m,n}$
and $\| cf \| = 1$.

\vspace{1ex}

Let $\mathcal{A}$ be a separated Weierstrass system and $K$ a
valued field. A {\em separated analytic} $\mathcal{A}$-{\em
structure} on $K$ is a collection of homomorphisms $\sigma_{m,n}$
from $A_{m,n}$ to the ring of $K^{\circ}$-valued functions on
$(K^{\circ})^{m} \times (K^{\circ \circ})^{n}$, $m,n \in
\mathbb{N}$, such that

1) $\sigma_{0,0} (I) \subset K^{\circ \circ}$;

2) $\sigma_{m,n}(\xi_{i})$ and $\sigma_{m,n}(\rho_{j})$ are the
$i$-th and $(m+j)$-th coordinate functions on $(K^{\circ})^{m}
\times (K^{\circ \circ})^{n}$, respectively;

3) $\sigma_{m+1,n}$ and $\sigma_{m,n+1}$ extend $\sigma_{m,n}$,
where functions on $(K^{\circ})^{m} \times (K^{\circ \circ})^{n}$
are identified with those functions on
$$ (K^{\circ})^{m+1} \times (K^{\circ \circ})^{n} \ \ \ \text{or} \
   \ \ (K^{\circ})^{m} \times (K^{\circ \circ})^{n+1} $$
which do not depend on the coordinate $\xi_{m+1}$ or $\rho_{n+1}$,
respectively.

\vspace{1ex}

It can be shown via Weierstrass division that analytic
$\mathcal{A}$-structures preserve composition; more precisely,
functions from $A_{k,l}$ may substitute for the variables $\xi$
and functions from $A_{k,l}^{\circ}$ may substitute for the
variables $\rho$ (op.~cit., Proposition~4.5.3). When considering a
particular field $K$ with analytic $\mathcal{A}$-structure, one
may assume that $\mathrm{ker}\, \sigma_{0,0} = (0)$. Indeed,
replacing $A$ by $A/\mathrm{ker}\, \sigma_{0,0}$ yields an
equivalent analytic structure on $K$ with this property. Then $A =
A_{0,0}$ can be regarded as a subring of $K^{\circ}$.

\vspace{1ex}

If the ground field $K$ is trivially valued, then $K^{\circ \circ}
= (0)$ and the analytic structure reduces to the algebraic
structure given by polynomials. If $K$ is non-trivially valued,
then the function induced by a power series from $A_{m,n}$, $m,n
\in \mathbb{N}$, is the zero function iff the image in $K$ of each
of its coefficients is zero (op.~cit., Proposition~4.5.4). A
separated analytic $\mathcal{A}$-structure on a valued field $K$
can be uniquely extended to any algebraic extension $K'$ of $K$;
in particular, to the algebraic closure $K_{alg}$ of $K$
(op.~cit., Theorem~4.5.11). (These properties remain valid for
strictly convergent analytic structures as well.) Every valued
field with separated analytic structure is Henselian (op.~cit.,
Proposition~4.5.10).

\vspace{1ex}

\begin{remark}\label{ext-par}
From now on, we shall always assume that the ground field $K$ is
non-trivially valued and that $\sigma_{0,0}$ is injective. Under
the assumptions, one can canonically obtain by extension of
parameters a (unique) separated Weierstrass system
$\mathcal{A}(K)$ over $(K^{\circ},K^{\circ \circ})$ so that $K$
has separated analytic $\mathcal{A}(K)$-structure; a similar
extension can be performed for any subfield $F \subset K$
(op.~cit., Theorem~4.5.7). (This technique holds for strictly
convergent Weierstrass systems as well.)
\end{remark}

\vspace{1ex}

Now we can describe the analytic language $\mathcal{L}$ of an
analytic structure $K$ determined by a separated Weierstrass
system $\mathcal{A}$. We begin by defining the semialgebraic
language $\mathcal{L}_{Hen}$. It is a two sorted language with the
main, valued field sort $K$, and the auxiliary $RV$-sort
$$ RV = RV(K) := RV^{*}(K) \cup \{ 0 \}, \ \ \
   RV^{*}(K) := K^{\times}/(1 + K^{\circ \circ}); $$
here $A^{\times}$ denotes the set of units of a ring $A$. The
language of the valued field sort is the language of rings
$(0,1,+,-, \cdot)$. The language of the auxiliary sort is the
so-called inclusion language (op.~cit., Section~6.1). The only map
connecting the sorts is the canonical map
$$ rv: K \to RV(K), \ \ \ 0 \mapsto 0. $$
Since
$$ \widetilde{K}^{\times} \simeq (K^{\circ})^{\times}/(1 + K^{\circ \circ})
   \ \ \ \text{and} \ \ \ \Gamma \simeq K^{\times}/(K^{\circ})^{\times}, $$
we get the canonical exact sequence
$$ 1 \to \widetilde{K}^{\times} \to RV^{*}(K) \to \Gamma \to 0. $$
This sequence splits iff the valued field $K$ has an angular
component map.

\vspace{1ex}

The analytic language $\mathcal{L} =
\mathcal{L}_{Hen,\mathcal{A}}$ is the semialgebraic language
$\mathcal{L}_{Hen}$ augmented on the valued field sort $K$ by the
reciprocal function $1/x$ (with $1/0 :=0$) and the names of all
functions of the system $\mathcal{A}$, together with the induced
language on the auxiliary sort $RV$ (op.~cit., Section~6.2). A
power series $f \in A_{m,n}$ is construed via the analytic
$\mathcal{A}$-structure on their natural domains and as zero
outside them. More precisely, $f$ is interpreted as a function
$$ \sigma (f) = f^{\sigma}: (K^{\circ})^{m} \times (K^{\circ \circ})^{n} \to
   K^{\circ}, $$
extended by zero on $K^{m+n} \setminus (K^{\circ})^{m} \times
(K^{\circ \circ})^{n}$.

\vspace{1ex}

In the equicharacteristic case, however, the induced language on
the auxiliary sort $RV$ coincides with the semialgebraic inclusion
language. It is so because then \cite[Lemma~6.3.12]{C-Lip-0} can
be strengthen as follows, whereby \cite[Lemma~6.3.14]{C-Lip-0} can
be directly reduced to its algebraic analogue. Consider a strong
unit on the open ball $B = K_{alg}^{\circ \circ}$. Then $rv
(E^{\sigma})(x)$ is constant when $x$ varies over $B$. This is no
longer true in the mixed characteristic case. There, a weaker
conclusion asserts that the functions $rv_{n} (E^{\sigma})(x)$, $n
\in \mathbb{N}$, depend only on $rv_{n}(x)$ when $x$ varies over
$B$; actually, $rv_{n} (E^{\sigma})(x)$ depend only on $\: x \! \!
\mod (n \cdot K^{\circ \circ}_{alg})$ when $x$ varies over $B$, as
indicated in~\cite[Remark~A.1.12]{C-Lip}. Under the circumstances,
the residue field $\widetilde{K}$ is orthogonal to the value group
$\Gamma_{K}$, whenever the ground field $K$ has an angular
component map or, equivalently, when the auxiliary sort $RV$
splits (in a non-canonical way):
$$ RV^{*}(K) \simeq \widetilde{K}^{\times} \times \Gamma_{K}. $$
This means that every definable set in $\widetilde{K}^{m} \times
\Gamma_{K}^{n}$ is a finite union of the Cartesian products of
some sets definable in $\widetilde{K}^{m}$ (in the language of
rings) and in $\Gamma_{K}^{n}$ (in the language of ordered
groups). The orthogonality property will often be used in the
paper, similarly as it was in the algebraic case treated in our
papers~\cite{Now-Sel,Now-thm}.

\begin{remark}\label{angular}
Not all valued fields $K$ have an angular component map, but it
exists if $K$ has a cross section, which happens whenever $K$ is
$\aleph_{1}$-saturated (cf.~\cite[Chap.~II]{Ch}). Moreover, a
valued field $K$ has an angular component map whenever its residue
field $\Bbbk$ is $\aleph_{1}$-saturated
(cf.~\cite[Corollary~1.6]{Pa2}). In general, unlike for $p$-adic
fields and their finite extensions, adding an angular component
map does strengthen the family of definable sets. Since the
$K$-topology is $\mathcal{L}$-definable, the closedness theorem is
a first order property. Therefore it can be proven using
elementary extensions, and thus one may assume that an angular
component map exists.
\end{remark}

Denote by $\mathcal{L}^{*}$ the analytic language $\mathcal{L}$
augmented by all Henselian functions
$$ h_{m}: K^{m+1} \times RV(K) \to K,  \ \ m \in \mathbb{N}, $$
which are defined by means of a version of Hensel's lemma
(cf.~\cite{C-Lip-0}, Section~6).

\vspace{1ex}

Let $\mathcal{T}_{Hen,\mathcal{A}}$ be the theory of all Henselian
valued fields of characteristic zero with separated analytic
$\mathcal{A}$-structure. Two crucial results about analytic
structures are Theorems~6.3.7 and~6.3.8 from~\cite{C-Lip-0}),
stated below.

\begin{theorem}\label{ball}
The theory $\mathcal{T}_{Hen,\mathcal{A}}$ eliminates valued field
quantifiers, is b-minimal with centers and preserves all balls.
Moreover, $\mathcal{T}_{Hen,\mathcal{A}}$ has the Jacobian
property.
\end{theorem}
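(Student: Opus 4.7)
The plan is to follow the classical route for analytic quantifier elimination: reduce arbitrary analytic formulas to algebraic Henselian ones by repeated Weierstrass preparation and division, and then read off all four conclusions from the resulting normal form.

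First I would work in the expanded language $\mathcal{L}^{*}$ obtained from $\mathcal{L}$ by adjoining all Henselian functions $h_{m}$; since these are $\mathcal{L}$-definable via Hensel's lemma, elimination in $\mathcal{L}^{*}$ yields elimination of valued field quantifiers in $\mathcal{L}$. The key preparatory result is that every analytic term $t(x,y) = \sigma_{m,n}(f)(x,y)$, with $x$ a single valued field variable and $f$ belonging to $A_{m,n}$ or to a ring of $\mathcal{A}$-fractions, admits---after a partition of the parameter $y$-space controlled by $RV$-data---a Weierstrass normal form $t = u(x,y) \cdot P(x,y)$ in which $u$ is a unit of the analytic structure (so $|u|=1$ and $rv(u)$ depends only on $y$) and $P$ is a polynomial in $x$ whose roots are supplied by the $h_{m}$. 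This is proved by a double induction on the number of $\rho$-variables and on the regularity degree of $f$, after a coordinate change making $f$ either $\xi$- or $\rho$-regular.

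Given this normal form, I would eliminate a single existential quantifier $\exists x\, \varphi(x,y)$ in the usual way. After Boolean manipulation we may assume $\varphi$ is a conjunction of atomic conditions $rv(t_{i}(x,y)) \in S_{i}$, with $S_{i}$ an $RV$-condition. Simultaneous preparation of the $t_{i}$---which requires choosing a common regular coordinate and refining the partition---turns each $t_{i}$ into a unit times a polynomial with Henselian-definable roots, so that the set of $x$ satisfying $\varphi$ becomes a Boolean combination of balls centered at these roots; its non-emptiness is then an $RV$-condition on $y$ alone. The same normal form immediately delivers b-minimality with centers: the roots of the $P_{i}$ serve as the centers, and over each piece of the refined partition the fibers of a definable subset of $K \times Y$ become finite disjoint unions of balls around those centers. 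This gives preservation of balls, because on a cell $t_{i}$ reduces to a unit times a monomial in $x-c$ and hence maps each sub-ball onto a ball.

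For the Jacobian property I would differentiate the normal form: on each cell, $\partial_{x}t = u'P + uP'$ is again a unit times a polynomial of lower degree after a further Weierstrass division, which upon refining cells has constant $rv$ and supplies the uniform first-order approximation $rv(t(x)-t(x')) = rv(\partial_{x}t \cdot (x-x'))$ on every fiber. The main obstacle I expect is the \emph{simultaneous} preparation of several analytic terms in one variable over a non-trivially valued parameter space: different $t_{i}$ may be regular with respect to different variables, or neither $\xi$- nor $\rho$-regular without a preliminary coordinate change, so one needs the full mixed-variable flexibility of the separated Weierstrass system together with careful bookkeeping of the auxiliary $RV$-data tracking which Weierstrass branch applies. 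A secondary difficulty is making the Henselian functions $h_{m}$ name the analytically varying roots \emph{uniformly} across all cells, so that this parametric dependence is genuinely captured inside the language $\mathcal{L}^{*}$ rather than only pointwise.
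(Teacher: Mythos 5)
The paper does not prove this theorem. Immediately before stating Theorems~\ref{ball} and~\ref{term}, the text reads ``Two crucial results about analytic structures are Theorems~6.3.7 and~6.3.8 from~\cite{C-Lip-0}, stated below,'' so both are imported from Cluckers--Lipshitz without re-derivation. There is thus no internal argument in the paper against which to check yours.

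That said, your outline captures the broad shape of the Cluckers--Lipshitz strategy: pass to $\mathcal{L}^{*}$, Weierstrass-prepare analytic terms to (unit)$\times$(polynomial) with Henselian-definable roots, and read off elimination, b-minimality, ball preservation and the Jacobian property from the resulting normal form. But two of your compressions are exactly where the real work lies, and you should not present them as bookkeeping. First, the uniform-in-parameters preparation of $\sigma(f)(x,y)$ when the regularity degree of $f$ varies with $y$ is precisely the point at which the weak Noetherian property, built into the very definition of a separated Weierstrass system, is indispensable: it is what bounds and definably parametrizes the preparation data. Your sketch never invokes that axiom, and without it the double induction you describe has nothing to stabilize against over a non-trivially valued parameter space. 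Second, the Jacobian property is not obtained simply by differentiating a normal form on each cell; it requires a compatibility between the cell decomposition and derivatives that is established separately in the source, which is why Cluckers--Lipshitz state it as an additional clause rather than a corollary of b-minimality. As a first pass at reconstructing the cited proof your plan is a reasonable approximation, but it is not itself a proof.
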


\begin{theorem}\label{term}
Let $K$ be a Henselian field with separated analytic
$\mathcal{A}$-structure. Let $f: X \to K$, $X \subset K^{n}$, be
an $\mathcal{L}(B)$-definable function for some set of parameters
$B$. Then there exist an $\mathcal{L}(B)$-definable function $g: X
\to S$ with $S$ auxiliary and an $\mathcal{L}^{*}(B)$-term $t$
such that
$$ f(x) = t(x,g(x)) \ \ \ \text{for all} \ \ x \in X. $$
\end{theorem}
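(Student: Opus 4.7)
The plan is to combine elimination of valued field quantifiers with b-minimal cell decomposition with centers, both furnished by Theorem~\ref{ball}, in order to reduce the assertion to the local problem of naming the unique solution of a quantifier-free valued-field condition. I would work by induction on the ambient dimension $n$ of $X$, with the base case $n=0$ trivial; the auxiliary-sort-valued function $g$ is built up along the way by recording a cell index together with the RV-parameters produced at each stage.

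For the inductive step, I would first use quantifier elimination to rewrite the graph of $f$ inside $X \times K$ as a formula whose only quantifiers range over the auxiliary sort, and then apply b-minimal cell decomposition to the last coordinate $y$. A cell containing $\mathrm{graph}(f)$ is parametrised by a center $c(x)$ together with RV-data about $y - c(x)$; since $\{y : (x,y) \in \mathrm{graph}(f)\}$ is a singleton for every $x$, the RV-data over each cell is determined by a definable auxiliary-sort function, which we fold into $g(x)$. By induction the center $c(x)$ is itself given on each cell by an $\mathcal{L}^{*}(B)$-term applied to $x$ and a piece of auxiliary data, so the remaining task is to show that the fibrewise recipe recovering $y$ from $c(x)$ and the RV-information is expressible as a term in $\mathcal{L}^{*}$.

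This last step is precisely where the Henselian functions $h_m$ enter. After Weierstrass division by the analytic functions from $\mathcal{A}$, the quantifier-free condition characterising the unique $y$ is brought into the shape of a Henselian polynomial equation in the variable $y - c(x)$, whose RV-hypotheses (leading coefficient, derivative condition, size of the evaluation) are ensured uniformly by the Jacobian property furnished in Theorem~\ref{ball}. By definition of $\mathcal{L}^{*}$, the unique root of such an equation is named by some $h_m$ applied to $x$, $c(x)$, and the RV-parameter, which yields the desired term on the cell. Patching the finitely many cell-terms by means of the cell-index coordinate of $g$ produces a single $\mathcal{L}^{*}(B)$-term $t$ with $f(x) = t(x, g(x))$ on all of $X$.

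The main obstacle is the last step: verifying that, after Weierstrass preparation, a single application of one of the Henselian functions $h_m$ really extracts $f(x)$ from $x$ and the auxiliary data. This requires that the Jacobian property be strong enough to make the Hensel hypotheses hold uniformly in $x$ across each b-minimal cell, and it depends on a careful analysis of how the separated Weierstrass system behaves under the substitutions performed during cell decomposition; the Jacobian property stated in Theorem~\ref{ball}, together with preservation of balls, is exactly what is tailored to make this uniformity work.
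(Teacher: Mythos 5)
The paper itself contains no proof of this theorem: it is stated, together with Theorem~\ref{ball}, as quoted from Cluckers--Lipshitz (Theorems~6.3.7 and~6.3.8 of \cite{C-Lip-0}). There is therefore no ``paper's own proof'' to compare against, and I can only assess your reconstruction on its own terms.

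Your tool kit---valued-field quantifier elimination, b-minimal cell decomposition with centers, Weierstrass division, the Jacobian property, and the Henselian functions $h_m$---is exactly the right one, and the overall shape of the argument (reduce to naming the unique root of a Henselian condition) is correct. But the induction you set up does not close. You induct on the ambient dimension $n$, decompose $X \times K$ into cells over $X$ with respect to the last coordinate $y$, and obtain a center $c : X \to K$. You then assert that ``by induction the center $c(x)$ is itself given on each cell by an $\mathcal{L}^{*}(B)$-term applied to $x$ and a piece of auxiliary data.'' However, $c$ is a definable function on a subset of $K^{n}$, which is precisely the case currently under proof, not the case $n-1$ that your inductive hypothesis covers. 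In the degenerate point-cell case the fibre of the cell is the singleton $\{c(x)\}$, so $f$ \emph{equals} $c$, and the assertion that $c$ is a term is then literally the statement you are trying to establish. Theorem~\ref{ball} only asserts that the centers are definable, not that they are $\mathcal{L}^{*}$-terms, so this step requires real work that the stated induction cannot supply. The standard repair is either to decompose $X$ itself with respect to its own last coordinate $x_{n}$, so that the centers live over $K^{n-1}$ and the inductive hypothesis genuinely applies (at the cost of a less direct recovery of $f$), or to run a joint induction that proves cell decomposition with term centers and the term-structure statement simultaneously, keeping track of the complexity of the terms in the quantifier-free description; in either case the bookkeeping is substantively different from the one-directional induction you describe.
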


It follows from Theorem~\ref{ball} that the theory
$\mathcal{T}_{Hen,\mathcal{A}}$ admits b-minimal cell
decompositions with centers (cf.~\cite{C-L}).

\vspace{1ex}

\section{Rings of analytic function germs}

Keeping the notation from~\cite{C-Lip-0}, put
$$ A_{m,n}^{\dag}(K) := K \otimes_{K^{\circ}} A_{m,n}(K). $$
By the weak Noetherian property, we immediately get
$$ A_{m,n}^{\dag}(K)^{\circ} := \{ f \in A_{m,n}^{\dag}(K): \, \|
   f \| \leq 1 \} = A_{m,n}(K) = $$
$$ \{ f \in A_{m,n}^{\dag}(K): \, |f^{\sigma}(a,b)| \leq 1 \ \ \text{for all} \
   (a,b) \in (K_{alg}^{\circ})^{m} \times (K_{alg}^{\circ \circ})^{n} \} $$
and
$$ A_{m,n}^{\dag}(K)^{\circ \circ} := (K^{\circ \circ},\rho) \, A_{m,n}^{\dag}(K)^{\circ}
   = A_{m,n}(K)^{\circ} = $$
$$ \{ f \in A_{m,n}^{\dag}(K): \, |f^{\sigma}(a,b)| < 1 \ \ \text{for all} \
   (a,b) \in (K_{alg}^{\circ})^{m} \times (K_{alg}^{\circ \circ})^{n} \}. $$

\vspace{1ex}

It is demonstrated in~\cite[Section~5]{C-Lip-0} that the classical
R\"{u}ckert theory from \cite[Section~5.2.5]{B-G-R} applies to the
rings $A_{m,0}^{\dag}(K)$ and $A_{0,n}^{\dag}(K)$. Indeed, for any
$f \in A_{m,0}(K)$ or $f \in A_{0,n}(K)$, $f \neq 0$, there is an
$a \in K$ such that $af \in A_{m,0}(K)$ or $af \in A_{0,n}(K)$,
respectively, $\| af \| = 1$, and $af$ is regular after a
Weierstrass change of variables. Hence those rings enjoy many good
algebraic properties. The authors conjecture that those properties
are shared also by the rings $A_{m,n}(K)$ of separated analytic
functions, which seems to be a problem yet unsolved.

We shall demonstrate that the rings of analytic function germs are
excellent and regular. Therefore resolution of singularities can
be applied to them, which will be used in the proof of curve
selection in Section~7. Put
$$ \Delta_{m,n}(r) := \{ (\xi,\rho) \in K^{m+n}: \;
   |\xi_{i} | \leq r , \ |\rho_{j} | < r \} = c \cdot \Delta_{n,m}(1) $$
with $c \in K$, $c \neq 0,$ and $r = |c|$. For any $f \in
A_{m,n}(K)$, the power series
$$ f_{c}(\xi,\rho) := f(\xi/c,\rho/c) \in K[[\xi,\rho]] $$
determines a function
$$ f_{c}^{\sigma} : \Delta_{m,n}(r) \longrightarrow K^{\circ}. $$
Put
$$ A_{m,n}(K,r) := \{ f^{c}: \, f \in A_{m,n}(K) \}, \ \
   A_{m,n}^{\dag}(K,r) := K \otimes_{K^{\circ}} A_{m,n}(K,r), $$
(obviously, this definition does not depend on the choice of $c$
with $|c|=r$), and let
$$ A_{m,n}^{loc}(K) := \bigcup_{r>0}\, A_{m,n}^{r}(K) $$
be the direct limit of the system of rings
$$ A_{m,n}^{\dag}(K,r) \subset A_{m,n}^{\dag}(K,s),
   \ \ 0 < s <r. $$
Under the assumptions imposed on analytic structures throughout
the paper, one can identify a power series $f_{c}$ with the
analytic function $f_{c}^{\sigma}$. It is not difficult to check
that $A_{m,n}^{loc}(K)$ are local rings with maximal ideal
generated by the variables $\xi$ and $\rho$.

\begin{remark}\label{rem-germs}
In fact, it follows from~\cite[Prop.~4.5.3]{C-Lip-0} (on
preserving composition) that the rings $A_{m,n}^{loc}(K)$ depend
only on $p=m+n$. Hence the variables $\xi$ and $\rho$ play locally
the same role, and thus the use of double indices $m,n$ is
immaterial and refers merely to the names of variables. Therefore
it is more natural to use the notation
$$ A_{p}^{loc}(K) := A_{m,n}^{loc}(K), \ \ p=m+n, $$
and call $A_{p}^{loc}(K)$ the ring of analytic function germs at
$0 \in K^{p}$ induced by the separated Weierstrass system.
\end{remark}

\vspace{1ex}

Also note that the Gauss norm and supremum norm on
$A_{m,n}^{\dag}(K)$ coincide whenever the residue field
$\widetilde{K}$ is infinite (maximal modulus principle;
cf.~\cite[Remark~5.2.8]{C-Lip-0} or \cite[Section~5.1.4,
Proposition~3]{B-G-R} for the classical case).

\vspace{1ex}

Consider now any $f \in A_{m,n}(K)$ and $c \in K^{\circ \circ}$,
$c \neq 0$. Similarly as in the classical case of strictly
convergent power series elaborated in~\cite[Section~5.2.3
and~5.2.4]{B-G-R}, the power series $a f_{1/c}$ is, after a
Weierstrass change of variables, regular with respect to any of
the variables $\xi$ or $\rho$ for some $a \in K$. Therefore
Weierstrass preparation and division, as well as the Weierstrass
finiteness theorem hold for the rings $A_{p}^{loc}(K)$. The latter
enables induction on the number of indeterminates. In particular,
the residue field is a finite extension of the ground field $K$.
Hence also applicable here is R\"{u}ckert's theory (op.~cit.,
Section~5.2.5) We can thus state the following

\begin{theorem}
The rings $A_{p}^{loc}(K)$, $p \in \mathbb{N}$, are local
Noetherian factorial Jacobson rings.  \hspace*{\fill} $\Box$
\end{theorem}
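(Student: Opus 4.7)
The plan is to invoke the classical R\"uckert theory as developed in~\cite[Section~5.2.5]{B-G-R}, all of whose inputs—Weierstrass preparation, Weierstrass division, and the Weierstrass finiteness theorem—are already in place for $A_p^{loc}(K)$ by the paragraph preceding the theorem. By Remark~\ref{rem-germs} the indices $(m,n)$ collapse to a single $p = m+n$, and the two types of variables may be treated on equal footing. The argument proceeds by induction on $p$; the base case $p=0$ is the ground field $K$, which is trivially local, Noetherian, factorial and Jacobson.

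For the inductive step, the key preparatory move is: given any nonzero germ $f \in A_p^{loc}(K)$, apply a suitable rescaling $f \mapsto a f_{1/c}$ (with $a \in K$ and $c \in K^{\circ\circ}$, as in the paragraph preceding the statement) followed by a linear Weierstrass change of coordinates to bring $f$ into regular position in the last variable $t$. Then Weierstrass preparation gives $f = u \cdot \omega$ with $u$ a unit and $\omega \in A_{p-1}^{loc}(K)[t]$ a Weierstrass polynomial. The locality of $A_p^{loc}(K)$, with maximal ideal generated by the variables, has already been recorded above. Noetherianity now follows by the standard Weierstrass argument: the finiteness theorem presents $A_p^{loc}(K)/(\omega)$ as a finitely generated $A_{p-1}^{loc}(K)$-module, hence Noetherian by the inductive hypothesis, so every ideal lifts to a finitely generated ideal of $A_p^{loc}(K)$.

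Factoriality is deduced, again by induction, via Gauss's lemma: since $A_{p-1}^{loc}(K)$ is a UFD by the inductive hypothesis, so is $A_{p-1}^{loc}(K)[t]$; combined with the uniqueness part of Weierstrass preparation, every nonzero element of $A_p^{loc}(K)$ factors uniquely (up to units) into irreducible Weierstrass polynomials. The Jacobson property follows the same R\"uckert line: Weierstrass finiteness forces the residue field at every maximal ideal to be a finite extension of $K$, and the classical argument of BGR~\S5.2.5--5.2.6, which uses only the inputs listed above, then transfers verbatim to give that every radical ideal is cut out by the maximal ideals containing it.

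The main obstacle is the preparatory step: because in a separated analytic structure the variables $\xi$ (ranging over $K^{\circ}$) and $\rho$ (ranging over $K^{\circ\circ}$) are treated asymmetrically at the level of the power series algebras $A_{m,n}(K)$, it is not a priori clear that an arbitrary nonzero germ can be made regular by an admissible change of coordinates. This is precisely where the localization-cum-rescaling built into the definition of $A_p^{loc}(K)$ pays off: after passing to $a f_{1/c}$ one lands in a situation where the BGR arguments of~\S5.2.3--5.2.4 apply verbatim, and once this preparation is secured the rest of the R\"uckert machinery transfers without change and the theorem follows.
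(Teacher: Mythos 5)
Your proposal follows exactly the same path as the paper: rescaling $f \mapsto a f_{1/c}$ (with $c \in K^{\circ\circ}$) plus a Weierstrass change of variables to obtain regularity, which then furnishes Weierstrass preparation, division, and the finiteness theorem for $A_p^{loc}(K)$, after which the classical R\"uckert theory of \cite[\S5.2.3--5.2.5]{B-G-R} and induction on the number of indeterminates deliver locality, Noetherianity, factoriality and the Jacobson property. The paper leaves these steps as an appeal to BGR while you fill in the standard induction/Gauss lemma details; there is no substantive difference in approach, and your remark on how the rescaling resolves the $\xi/\rho$ asymmetry is precisely the key point the paper emphasizes.
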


Hence we obtain two following

\begin{corollary}
The rings $A_{p}^{loc}(K)$, $p \in \mathbb{N}$, are excellent
regular local ring of dimension $p$.
\end{corollary}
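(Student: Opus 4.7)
The plan is to extract dimension and regularity from the Weierstrass machinery already at hand, and then to reduce excellence to a G-ring check via Matsumura's theorem. By the preceding theorem $A_{p}^{loc}(K)$ is Noetherian local, so the three remaining assertions are that $\dim A_{p}^{loc}(K) = p$, that the ring is regular, and that it is excellent.

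\textbf{Dimension and regularity.} By Remark~\ref{rem-germs} and the construction, the maximal ideal $\mathfrak{m}$ of $A_{p}^{loc}(K) = A_{m,n}^{loc}(K)$ is generated by the $p=m+n$ indeterminates $\xi_{1},\dots,\xi_{m},\rho_{1},\dots,\rho_{n}$, so Krull's height theorem gives $\dim A_{p}^{loc}(K)\leq p$. For the reverse inequality I would use Weierstrass preparation, now available in this setting after a Weierstrass change of variables, to produce an isomorphism $A_{p}^{loc}(K)/(\zeta)\cong A_{p-1}^{loc}(K)$ for any of the $p$ indeterminates $\zeta$; iterating, with base case $A_{0}^{loc}(K)=K$, yields a saturated chain of primes of length $p$, so $\dim A_{p}^{loc}(K)=p$. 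Since $\mathfrak{m}$ is then generated by $\dim$-many elements, the ring is regular.

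\textbf{Excellence.} The residue field $A_{p}^{loc}(K)/\mathfrak{m}$ is $K$ itself (evaluation at the origin), so the $\mathfrak{m}$-adic completion is the formal power series ring $\widehat{A_{p}^{loc}(K)} \cong K[[\xi_{1},\dots,\xi_{p}]]$. By a classical theorem of Matsumura, a regular Noetherian local ring is excellent if and only if it is a G-ring, i.e.\ the canonical map to its completion has geometrically regular fibers. Hence the problem reduces to showing that the fibers of
\[
A_{p}^{loc}(K) \longrightarrow K[[\xi_{1},\dots,\xi_{p}]]
\]
over primes of $A_{p}^{loc}(K)$ are geometrically regular. The plan here is to adapt Kiehl's classical proof for the local rings of the Tate algebra: Weierstrass preparation descends a prime of the completion, up to a Weierstrass-polynomial factor, into $A_{p-1}^{loc}(K)$, and an analytic Artin-type approximation links $A_{p}^{loc}(K)$ with its formal completion, so that an induction on $p$ can propagate the G-ring property upward.

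The main obstacle is precisely this G-ring step. Dimension, regularity, and the Matsumura reduction are essentially formal once Weierstrass preparation is in hand, whereas transporting the Kiehl--Matsumura argument from the strictly-convergent Tate-algebra setting to the present separated-analytic setting requires some care. The Weierstrass preparation, Weierstrass finiteness theorem and R\"{u}ckert theory established above for $A_{p}^{loc}(K)$ are, however, exactly the ingredients that entered the classical proof, which makes such a transport plausible.
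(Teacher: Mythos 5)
Your treatment of dimension and regularity is sound and essentially matches the paper: the upper bound comes from Krull's height theorem applied to the maximal ideal generated by the $p$ indeterminates, and the lower bound follows by exhibiting a chain of primes of length $p$ (the paper simply writes it down directly as $(0)\subset(\xi_1)\subset\cdots\subset(\xi_1,\dots,\xi_m,\rho_1,\dots,\rho_n)$, while you produce it by Weierstrass preparation and induction on $p$; both are fine). Regularity then follows as you say.

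Where your proposal has a genuine gap is the excellence step, and you already flag it yourself: you reduce excellence to the G-ring property (fibers of the completion map are geometrically regular) and then propose to transport Kiehl's argument from the Tate-algebra setting, but you do not actually carry out that transport. This is precisely the hard part, and leaving it at "plausible" means the proof is not complete. The paper avoids this entire route. Instead of going through the G-ring characterization and an Artin-approximation argument, it invokes the Jacobian criterion for excellence (\cite[Theorem~102]{Mat}): since $A_{p}^{loc}(K)$ contains its residue field $K$, which has characteristic zero, and since the partial derivatives $\partial/\partial\xi_i$, $\partial/\partial\rho_j$ are derivations over $K$ with $\partial\xi_i/\partial\xi_j=\delta_{ij}$ and $\partial\rho_i/\partial\rho_j=\delta_{ij}$, the weak Jacobian condition holds trivially, and excellence follows in one stroke. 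This is both simpler and more robust than the G-ring route; it exploits exactly the characteristic-zero hypothesis and the algebra-over-residue-field structure that are available here, rather than trying to redo Kiehl's analysis of fibers of the completion map. If you want to salvage your approach you would need to actually prove the geometric regularity of fibers for the separated-analytic local rings, which amounts to redeveloping a substantial chunk of Kiehl's theory in this setting; switching to the Jacobian criterion is the shorter path and is what the paper does.
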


\begin{proof}
The dimension of $A_{p}^{loc}(K) = A_{m,n}^{loc}(K)$ is $\geq
p=m+n$ because one has the chain of prime ideals
$$ (0) \subset (\mathbb{}\xi_{1}) \subset \ldots \subset
   (\xi_{1},\ldots,\xi_{m},\rho_{1}) \subset \ldots \subset
   (\xi_{1},\ldots,\xi_{m},\rho_{1}, \ldots,\rho_{n}). $$
Since the maximal ideal is generated by $p=m+n$ variables $\xi$
and $\rho$, the converse inequality and the regularity of
$A_{p}^{loc}(K)$ follows immediately. Finally, observe that
$A_{p}^{loc}(K)$ contains its residue field $K$ of characteristic
$0$, and that
$$ \partial/\partial \xi_{1}, \ldots, \partial/\partial \xi_{m},
   \partial/\partial \rho_{1}, \ldots, \partial/\partial \rho_{n}  $$
are derivatives of $A_{p}^{loc}(K)$ over $K$ such that
$$ \partial \xi_{i}/\partial \xi_{j} = \delta_{ij} \ \ \text{and} \ \
   \partial \rho_{i}/\partial \rho_{j} = \delta_{ij}. $$
Hence and by the Jacobian criterion for
excellence~\cite[Theorem~102]{Mat}, the ring $A_{p}^{loc}(K)$ is
excellent, concluding the proof.
\end{proof}

\vspace{1ex}

Finally, note that resolution of singularities applies to
excellent regular schemes (cf.~\cite[Theorem~1.1.3]{Tem}
or~\cite{Hi}, Corollary~3 on p.~146 along with the explanations on
p.~161, for the classical case of excellent regular local rings).
In particular, it applies to the scheme $X := \mathrm{Spec}\,
(A_{p}^{loc}(K))$. Below we state a version which refers, in fact,
to transformation to a normal crossing by blowing up.

\begin{theorem}
Let $f_{1},\ldots,f_{k} \in A_{p}^{loc}(K)$. Then there exists a
finite composite of blow-ups $\pi: \widetilde{X} \to X$ along
smooth centers such that $f_{1} \circ \pi, \ldots, f_{k} \circ
\pi$ are simple normal crossing divisors on $\widetilde{X}$.
\hspace*{\fill} $\Box$
\end{theorem}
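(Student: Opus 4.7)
The plan is to deduce the statement directly from the general principalization/embedded desingularization theorem for excellent regular Noetherian schemes of characteristic zero, applied to the scheme $X = \mathrm{Spec}\, (A_{p}^{loc}(K))$. All of the required hypotheses have been verified in the previous corollary: $A_{p}^{loc}(K)$ is an excellent regular local Noetherian ring of dimension $p$, it contains its residue field $K$ of characteristic zero, and it is factorial. These are precisely the assumptions under which the resolution machinery of Hironaka, and in its modern functorial form for excellent schemes of Temkin, applies.

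First, I would form the product ideal
$$ \mathcal{I} := (f_{1} \cdot \, \ldots \, \cdot f_{k}) \subset A_{p}^{loc}(K), $$
and apply the embedded principalization theorem cited as \cite[Theorem~1.1.3]{Tem} (equivalently, \cite{Hi}, Corollary~3 on p.~146, together with the remarks on p.~161 concerning applicability to excellent regular local rings in characteristic zero). The output is a finite composite $\pi \colon \widetilde{X} \to X$ of blow-ups along smooth centers, each contained in the exceptional locus of the previous step, such that $\pi^{*}\mathcal{I}$ is, locally on $\widetilde{X}$, a monomial in suitable regular parameters, and such that the total exceptional divisor of $\pi$ is a simple normal crossing divisor.

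Since $\pi^{*}(f_{1}\cdots f_{k})$ is locally a monomial in SNC components, each factor $f_{i}\circ \pi$ is itself locally a monomial in a subfamily of these components, and hence defines a simple normal crossing divisor on $\widetilde{X}$. Thus a single $\pi$ principalizes all $k$ functions simultaneously, which is exactly the content of the statement.

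The main obstacle is not geometric but bibliographic, namely matching the hypotheses of the cited resolution theorem (quasi-excellence, regularity, characteristic zero of the residue field, Noetherianity) to the ring $A_{p}^{loc}(K)$; all of these points have been secured in the previous corollary. Once these ingredients are in place the proof reduces to an invocation of the quoted theorem, with no additional computation required.
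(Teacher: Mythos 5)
Your proposal is correct and follows essentially the same route as the paper, which simply cites Temkin's functorial resolution theorem for quasi-excellent schemes (and the classical Hironaka references for excellent regular local rings in characteristic zero) and observes that it applies to $X = \mathrm{Spec}\,(A_p^{loc}(K))$ once excellence, regularity and the characteristic-zero residue field have been established in the preceding corollary. The one step the paper leaves tacit and you make explicit is the reduction to a single ideal via the product $\mathcal{I} = (f_1 \cdots f_k)$: since the blow-up of a regular scheme along a smooth center is again regular, the local rings of $\widetilde{X}$ are regular, hence UFDs, so if $\pi^*(f_1\cdots f_k)$ is locally a unit times a monomial in regular parameters, each factor $f_i\circ\pi$ must also be a unit times a monomial; this small observation is worth stating if the proof is to be written out, and you give it correctly.
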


\begin{remark}
It is well known that in the conclusion one can require
$$ f_{1} \circ \pi, \ldots, f_{k} \circ \pi $$
to be linearly ordered with respect to divisibility relation at
each point in $\widetilde{X}$.
\end{remark}

It is possible to adapt here Serre's concept of analytic manifolds
(\cite{Se}, Part~II, Chap.~III) with respect to the class of
analytic function germs induced (via translations) by
$A_{p}^{loc}(K)$. Though this concept is quite weak, it is
sufficient for our further applications. For simplicity, we shall
denote by the same symbol a function germ or a set germ and its
representative. This convention will not lead to confusion.
Clearly, $X$, $\widetilde{X}$ and the map $\pi: \widetilde{X} \to
X$ correspond respectively to:

1) a polydisk $\Delta_{p}(r) := \Delta_{p,0}(r)$ and a closed (in
the $K$-topology) analytic submanifold $\widetilde{X}_{0}$ of
$\Delta_{p}(r) \times \mathbb{P}^{N}(K)$, for some $0< r <1$ and
$N \in \mathbb{N}$;

2) the restriction to $\widetilde{X}_{0}$ of the projection of
$\Delta_{p}(r) \times \mathbb{P}^{N}(K)$ onto the first factor.

We may regard $X_{0}$ and $\widetilde{X}_{0}$ as the sets of
"$K$-rational points" of $X$ and $\widetilde{X}$, respectively.
Summing up, we obtain

\begin{corollary}\label{cor-NC}
Let $f_{1},\ldots,f_{k} \in A_{p}^{loc}(K)$. Then there exists a
finite composite of blow-ups $\pi_{0}: \widetilde{X}_{0} \to
\Delta_{p}(r)$ along smooth analytic submanifolds such that the
pull-backs $f_{1} \circ \pi_{0}, \ldots, f_{k} \circ \pi_{0}$ are
simple normal crossing divisors on $\widetilde{X}_{0}$. The map
$\pi_{0}$ is surjective and definably closed by virtue of the
closedness theorem.

Moreover, one can ensure that
$$ f_{1} \circ \pi_{0}, \ldots, f_{k} \circ \pi_{0} $$
are linearly ordered with respect to divisibility relation at each
point in $\widetilde{X}_{0}$. It means that at each point $a \in
\widetilde{X}_{0}$ there are local analytic coordinates
$x=(x_{1},\ldots,x_{p})$ with $x(a)=0$ such that
$$ f_{i} \circ \pi_{0} (x) = u_{i}(x) \cdot x^{\alpha_{i}}, \ \
   i=1,\ldots,k, $$
where the $u_{i}$ are analytic units at $a$, $u_{i}(a) \neq 0$,
$\alpha_{i} \in \mathbb{N}^{p}$, and the monomials
$x^{\alpha_{i}}$, $i=1,\ldots,k$, are linearly ordered by
divisibility relation.
\end{corollary}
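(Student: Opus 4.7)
The plan is to lift the scheme-theoretic resolution provided by the preceding theorem (together with its Remark) to the level of $K$-rational analytic points via the Serre-type construction of analytic manifolds introduced just above, and then invoke Corollary~\ref{clo-th-cor-0} to obtain definable closedness of the resulting map.

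First I would apply the preceding theorem to $X=\mathrm{Spec}\,(A_p^{loc}(K))$ and the germs $f_1,\ldots,f_k$, producing a finite composite $\pi:\widetilde{X}\to X$ of blow-ups along smooth centers such that $f_1\circ\pi,\ldots,f_k\circ\pi$ are simple normal crossing divisors, linearly ordered by divisibility at every point of $\widetilde{X}$. Since only finitely many blow-ups occur in the tower and each smooth center is cut out by finitely many germs drawn from the rings $A_q^{loc}(K)$ that appear, a standard shrinking argument lets me choose a single radius $0<r<1$ on which analytic representatives of the $f_i$ and of all centers in the tower are simultaneously defined and, at each stage, cut out genuine smooth analytic submanifolds in the Serre sense of Section~3.

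Next I would carry out the blow-ups analytically on this common polydisk. Each individual blow-up of a (sub)polydisk along a smooth analytic submanifold is realized locally as a closed embedding into the product with some $\mathbb{P}^{N_i}(K)$; iterating and composing with a Segre-type embedding $\mathbb{P}^{N_1}(K)\times\cdots\times\mathbb{P}^{N_s}(K)\hookrightarrow\mathbb{P}^{N}(K)$, I realize the final analytic manifold $\widetilde{X}_0$ as a closed analytic (hence $\mathcal{L}$-definable closed) submanifold of $\Delta_p(r)\times\mathbb{P}^{N}(K)$, with $\pi_0$ the restriction of the first projection. Surjectivity of $\pi_0$ is immediate because each individual blow-up surjects onto its base. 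The normal-crossing and divisibility conclusions at each point of $\widetilde{X}_0$ follow by reading the algebraic conclusions at the closed (``$K$-rational'') points of $\widetilde{X}$, using the local coordinate description in the Remark. Finally, definable closedness of $\pi_0$ follows at once from Corollary~\ref{clo-th-cor-0} applied to the projection of $\Delta_p(r)\times\mathbb{P}^{N}(K)$ onto $\Delta_p(r)$ and the closed definable set $\widetilde{X}_0$.

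The main obstacle is not the invocation of the closedness theorem, which is essentially mechanical, but the careful passage from the scheme-theoretic tower to its analytic incarnation on a single polydisk: one must verify that the chosen analytic representatives of the successive centers really are smooth analytic submanifolds (so that Serre-type blow-ups are legitimate), that uniform shrinking of the radius can be arranged through the entire tower, and that the resulting $\widetilde{X}_0$ remains $\mathcal{L}$-definable. All three are finiteness issues, controlled by the fact that the resolution data involve only finitely many germs; the technical content lies in the simultaneous bookkeeping of these germs, their radii of convergence, and the projective embeddings used to glue the analytic blow-ups together.
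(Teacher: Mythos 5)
Your proposal is correct and takes essentially the same route as the paper, which derives the corollary immediately from the preceding resolution theorem and its remark via the Serre-type analytic manifold discussion: $\widetilde{X}_{0}$ is realized as a closed analytic submanifold of $\Delta_{p}(r)\times\mathbb{P}^{N}(K)$, $\pi_{0}$ is the restriction of the first projection, and definable closedness follows from Corollary~\ref{clo-th-cor-0}. Your explicit bookkeeping of uniform shrinking of the radius, the Segre-type gluing of the successive projective embeddings, and the $\mathcal{L}$-definability of $\widetilde{X}_{0}$ simply spells out the finiteness argument that the paper leaves implicit in its "summing up" step.
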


\section{Proof of the closedness theorem}

In the algebraic case, the proofs of the closedness theorem given
in our papers~\cite{Now-Sel,Now-thm}) make use of the following
three main tools:

$\bullet$ elimination of valued field quantifiers and cell
decomposition due to Pas;

$\bullet$ fiber shrinking
(\cite[Proposition~6.1]{Now-Sel,Now-thm});

$\bullet$ and the theorem on existence of the limit
(\cite[Proposition~5.2]{Now-Sel} and \cite[Theorem~5.1]{Now-thm}).

\vspace{1ex}

In this paper, we apply analytic versions of quantifier
elimination, cell decomposition and term structure
(Theorems~\ref{ball} and~\ref{term}) due to
Cluckers--Lipshitz--Robinson.

\vspace{1ex}

Now recall the concept of fiber shrinking. Let $A$ be an
$\mathcal{L}$-definable subset of $K^{n}$ with accumulation point
$$ a = (a_{1},\ldots,a_{n}) \in K^{n} $$
and $E$ an $\mathcal{L}$-definable subset of $K$ with accumulation
point $a_{1}$. We call an $\mathcal{L}$-definable family of sets
$$ \Phi = \bigcup_{t \in E} \ \{ t \} \times \Phi_{t} \subset A $$
an $\mathcal{L}$-definable $x_{1}$-fiber shrinking for the set $A$
at $a$ if
$$ \lim_{t \rightarrow a_{1}} \, \Phi_{t} = (a_{2},\ldots,a_{n}),
$$
i.e.\ for any neighbourhood $U$ of $(a_{2},\ldots,a_{n}) \in
K^{n-1}$, there is a neighbourhood $V$ of $a_{1} \in K$ such that
$\emptyset \neq \Phi_{t} \subset U$ for every $t \in V \cap E$, $t
\neq a_{1}$. When $n=1$, $A$ is itself a fiber shrinking for the
subset $A$ of $K$ at an accumulation point $a \in K$.


\begin{proposition}\label{FS} (Fiber shrinking)
Every $\mathcal{L}$-definable subset $A$ of $K^{n}$ with
accumulation point $a \in K^{n}$ has, after a permutation of
coordinates, an $\mathcal{L}$-definable $x_{1}$-fiber shrinking at
$a$.
\end{proposition}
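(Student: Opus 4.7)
The plan is to mimic step-by-step the algebraic proof of fiber shrinking from \cite{Now-Sel,Now-thm}, substituting the analytic b-minimal cell decomposition with centers (Theorem~\ref{ball}) for the Pas cell decomposition used there, and invoking the term structure (Theorem~\ref{term}) wherever an explicit form of a definable function is needed. Since analytic data in cells enter only via $\mathcal{L}^{*}$-terms composed with functions into the auxiliary sort, and since orthogonality of $\widetilde{K}$ and $\Gamma_{K}$ (recalled at the end of Section~2, after passing if needed to an elementary extension with an angular component map via Remark~\ref{angular}) splits the $RV$-valued parameters into a residue-field part and a value-group part, this substitution is essentially formal.

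I proceed by induction on $n$. The base case $n=1$ is immediate: take $\Phi := A$. For the inductive step, I first apply b-minimal cell decomposition to $A$ and pass to a single cell that still admits $a$ as an accumulation point. After a permutation of coordinates, I may assume that $E := \pi_{1}(A) \subset K$ has $a_{1}$ as an accumulation point and that the fibers $A_{t} = \{ y \in K^{n-1} : (t,y) \in A \}$ form a definable family of non-empty cells, each described by a center $c(t) \in K^{n-1}$ together with an $RV$-condition on $y - c(t)$. Theorem~\ref{term} lets me write $c(t) = \tau(t, g(t))$ with $\tau$ an $\mathcal{L}^{*}$-term and $g$ mapping into an auxiliary set; after a finite partition of $E$ exploiting orthogonality, the residue-field part of $g$ becomes constant on each piece, so that $c$ there reduces to an $\mathcal{L}^{*}$-term in the single valued-field variable $t$.

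Applying Proposition~\ref{limit-th1} to each such piece, I obtain after a further finite partition a piece $E' \subset E$ still having $a_{1}$ as accumulation point and on which $c(t)$ has a limit $\ell \in K^{n-1}$ as $t \to a_{1}$. Because $a$ is an accumulation point of the original $A$, extracting a sequence $(t_{k}, y_{k}) \in A$ above $E'$ with $(t_{k}, y_{k}) \to a$ and controlling the cell's radius function (which must tend to $0$ along this sequence) forces $\ell = a' := (a_{2}, \ldots, a_{n})$. Setting $\Phi_{t}$ to be $A_{t}$ intersected with an appropriately shrinking ball around $c(t)$ then produces the sought $\mathcal{L}$-definable family $\Phi = \bigcup_{t \in E'} \{t\} \times \Phi_{t} \subset A$ with $\lim_{t \to a_{1}} \Phi_{t} = a'$. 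I expect the main obstacle to be the uniform control of the cell's center, radius and $RV$-parameters along $E'$ and the final matching $\ell = a'$; this is precisely what the combination of Proposition~\ref{limit-th1} with the orthogonality of $\widetilde{K}$ and $\Gamma_{K}$ is designed to handle.
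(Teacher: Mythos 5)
Your strategy --- cell decomposition combined with Proposition~\ref{limit-th1} --- is not the one the paper uses, and as written it has two genuine gaps. First, the key step asserts that the cell radius ``must tend to $0$ along this sequence,'' which is not justified. If the fibers $A_{t}$ are balls of non-shrinking radius, the ultrametric triangle inequality makes it perfectly possible that $a' \in A_{t}$ for all $t$ near $a_{1}$ even though $c(t)$ does not converge to $a'$; you would need to split this case off (there $\Phi_{t} := \{a'\}$ already works) before you can conclude $\ell = a'$. Second, Proposition~\ref{limit-th1} yields only a selection: a single definable subset $F \subset E$ accumulating at $a_{1}$ on which the center has a limit. It does not provide a finite partition of $E$, so nothing forces $a$ to remain an accumulation point of $A \cap (F \times K^{n-1})$ after you pass from $E$ to $F$. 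The finite-partition version is Proposition~\ref{limit-th2}, but that is proved from the closedness theorem, which itself invokes fiber shrinking, so citing it here would be circular. (Also, the announced ``induction on $n$'' is never actually used in the inductive step.)

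The paper's own route is cleaner and different in spirit: it does not touch b-minimal cells at all for this proposition. Instead, elimination of valued-field quantifiers, together with the orthogonality of $\widetilde{K}$ and $\Gamma$, replaces $A$ --- via $x \mapsto \bigl(v(x_{1}-a_{1}),\ldots,v(x_{n}-a_{n})\bigr)$ and angular components --- by a definable subset $P$ of $\Gamma^{n}$ accumulating at $(\infty,\ldots,\infty)$. Lemma~\ref{line-1}, obtained from relative quantifier elimination for ordered abelian groups, then supplies an affine semi-line $L \subset \Gamma^{n}$ with $P \cap L$ still accumulating at infinity, and pulling $L$ back to $K^{n}$ produces the $x_{1}$-fiber shrinking directly. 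This sidesteps cells, centers and radii entirely and requires no induction on $n$; it is precisely what the paper means by saying the algebraic argument ``can be repeated verbatim'' with the analytic quantifier elimination of Theorem~\ref{ball} in place of Pas.
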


Its proof was reduced, by means of elimination of valued field
quantifiers, to Lemma~\ref{line-1} below
(\cite[Lemma~6.2]{Now-thm}), which, in turn, was obtained via
relative quantifier elimination for ordered abelian groups. That
approach can be repeated verbatim in the analytic settings.

\begin{lemma}\label{line-1}
Let $\Gamma$ be an ordered abelian group and $P$ be a definable
subset of $\Gamma^{n}$. Suppose that $(\infty,\ldots,\infty)$ is
an accumulation point of $P$, i.e.\ for any $\delta \in \Gamma$
the set
$$ \{ x \in P: x_{1} > \delta, \ldots, x_{n} > \delta \} \neq \emptyset $$
is non-empty. Then there is an affine semi-line
$$ L = \{ (r_{1}t + \gamma_{1},\ldots,r_{n}t + \gamma_{n}): \, t
   \in \Gamma, \ t \geq 0 \} \ \ \ \text{with} \ \ r_{1},\ldots,r_{n}
   \in \mathbb{N}, $$
passing through a point $\gamma = (\gamma_{1},\ldots,\gamma_{n})
\in P$ and such that $(\infty,\ldots,\infty)$ is an accumulation
point of the intersection $P \cap L$ too. \hspace*{\fill} $\Box$
\end{lemma}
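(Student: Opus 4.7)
My plan is to combine Cluckers--Halupczok relative quantifier elimination for ordered abelian groups with a rational polyhedral/recession-cone argument. The overall strategy is to reduce $P$ to a single ``cell'' cut out by integer-linear inequalities and congruences, and then to manufacture the ray $L$ directly from the integer linear data.

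First I would apply the Cluckers--Halupczok QE (in the many-sorted language with the auxiliary imaginary sorts) to write $P$ as a finite union of cells of the form
\[
Q = \bigl\{x \in \Gamma^n : \textstyle\sum_j a_{ij} x_j \,\lhd_i\, c_i \ (i \leq N),\ \sum_j b_{kj} x_j \equiv d_k \pmod{m_k} \ (k \leq M)\bigr\},
\]
where $a_{ij}, b_{kj} \in \mathbb{Z}$, $m_k \in \mathbb{N}_{>0}$, $\lhd_i \in \{<,\leq,=,\geq,>\}$, and $c_i, d_k$ are suitable definable constants. Since $(\infty,\ldots,\infty)$ is an accumulation point of the union, it is an accumulation point of at least one cell, so I may replace $P$ by such a $Q$.

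Next I would handle the congruences uniformly. Letting $m := \mathrm{lcm}(m_1,\ldots,m_M)$, I would look for a direction $r = (r_1,\ldots,r_n)$ with each $r_j \in m\mathbb{N}$ and $r_j > 0$. For any $\gamma \in Q$ and $t \in \Gamma$ one computes $\sum_j b_{kj}(\gamma_j + t r_j) \equiv \sum_j b_{kj}\gamma_j \equiv d_k \pmod{m_k}$, because $\sum_j b_{kj} r_j \in m\Gamma \subseteq m_k\Gamma$; so the congruences are automatically preserved along any ray through $\gamma$ in such a direction. The real task is thus to find $r$ in the recession cone of the linear part,
\[
C = \bigl\{r \in \Gamma^n : \textstyle\sum_j a_{ij} r_j \leq 0 \text{ if } \lhd_i \in \{<,\leq\},\ \geq 0 \text{ if } \lhd_i \in \{>,\geq\},\ = 0 \text{ if } \lhd_i \text{ is } =\bigr\},
\]
having additionally $r_j \in m\mathbb{N}_{>0}$. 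For such an $r$, $\gamma + tr$ stays in the linear part of $Q$ for every $\gamma$ there and every $t \in \Gamma_{\geq 0}$. Taking points $x^{(k)} \in Q$ with $x^{(k)}_i > k$ for all $i$ and subtracting a fixed base point of $Q$ produces vectors whose images in $C$ have every coordinate unbounded; since the defining system of $C$ has integer coefficients, $C$ is a rational polyhedral cone, and a standard convex-cone argument in $\mathbb{Q}^n$ then shows that $C$ meets the open positive orthant $\mathbb{Q}^n_{>0}$. Clearing denominators and multiplying by $m$ yields the required $r \in (m\mathbb{N})^n$.

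To conclude, I would pick any $\gamma \in P = Q$ and set $L := \{\gamma + tr : t \in \Gamma_{\geq 0}\}$. Then $L \subseteq P$ by the two preceding steps, and since $\Gamma$ is non-trivially ordered, $t$ can be taken arbitrarily large, so each $\gamma_j + t r_j$ tends to $\infty$ along $L$; hence $(\infty,\ldots,\infty)$ is an accumulation point of $P \cap L$. The main obstacle will be the polyhedral step: one must carefully justify, working in the abstract ordered abelian group $\Gamma$ rather than in $\mathbb{R}^n$, the passage from ``$(\infty,\ldots,\infty)$ is an accumulation point of $Q$'' to ``the integer-defined recession cone $C$ meets $\mathbb{Q}^n_{>0}$''. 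Everything else — the QE reduction, the handling of the congruences via $m$, and the final assembly of $L$ — is essentially bookkeeping once the polyhedral step is in hand.
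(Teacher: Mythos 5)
Your overall strategy — apply Cluckers--Halupczok relative quantifier elimination to reduce to a cell, then extract a ray from the recession cone of the linear part — is the same route the paper indicates (it says the lemma ``was obtained via relative quantifier elimination for ordered abelian groups'' and refers to \cite[Lemma~6.2]{Now-thm} for the proof). However, there are two genuine gaps in your write-up. The first, and more serious, is the form of the cell $Q$: you silently assume Presburger-style cells, with integer-linear inequalities against constants $c_i\in\Gamma$ and congruences modulo fixed $m_k\in\mathbb{N}$. That is correct for $\mathbb{Z}$-groups, but the lemma is stated for an arbitrary ordered abelian group $\Gamma$, and the Cluckers--Halupczok QE is only \emph{relative} to the auxiliary imaginary sorts. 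After QE the conditions on $\Gamma^n$ involve comparisons of $\mathbb{Z}$-linear terms modulo definable \emph{convex subgroups} $\Gamma_\ell$ (and congruences of the form $\sum_j b_{kj}x_j - d_k \in m_k\Gamma + \Gamma_\ell$), with the relevant $\Gamma_\ell$ indexed by the auxiliary sorts. A condition such as $x_1 - x_2 \in \Gamma_\ell$ for a proper convex subgroup $\Gamma_\ell$ forces the additional linear constraint $r_1 = r_2$ on any admissible direction, so the recession cone you write down, built only from the coefficient matrix $A$, is in general too large. You need to explain how these auxiliary-sort and convex-subgroup conditions are either fixed or translated into extra integer-linear constraints on $r$ before the cone argument can begin.

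The second gap is the one you flag yourself, the ``standard convex-cone argument''. This step does work, but it is not automatic and should be spelled out; in particular one cannot argue topologically in $\Gamma^n$ because $\Gamma$ need not be Archimedean. The correct route is LP duality: write all (non-strict) recession inequalities as $Ar \le 0$ (with additional equalities from any $=$-conditions and from the convex-subgroup constraints). If the rational cone $C_{\mathbb{Q}} = \{r\in\mathbb{Q}^n : Ar\le 0\}$ misses the open positive orthant, then $C_{\mathbb{R}}\cap\mathbb{R}^n_{>0}=\emptyset$ (both cones are rational polyhedral, so an open real solution would yield a nearby rational one), and a Farkas/Gordan argument produces a nonzero $\lambda\in\mathbb{Z}^n_{\ge 0}$ of the form $\lambda = A^{\mathsf T}\mu$ with $\mu\ge 0$ integral. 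Then for every $x\in Q$ one has $\lambda\cdot x = \mu^{\mathsf T}Ax \le \mu^{\mathsf T}c$, a bound in $\Gamma$; but if $(\infty,\ldots,\infty)$ is an accumulation point of $Q$ and some $\lambda_{j_0}>0$, then $\lambda\cdot x$ is unbounded above. This contradiction produces the desired direction $r\in\mathbb{N}^n_{>0}$, which you then scale by the lcm $m$ to preserve the congruences, exactly as you propose. With these two points filled in, the proposal matches the paper's approach.
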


In a similar manner, one can obtain the following

\begin{lemma}\label{line-2}
Let $P$ be a definable subset of $\Gamma^{n}$ and
$$ \pi: \Gamma^{n} \to \Gamma, \ \ \ (x_{1},\ldots,x_{n}) \mapsto
   x_{1} $$
be the projection onto the first factor. Suppose that $\infty$ is
an accumulation point of $\pi(P)$. Then there is an affine
semi-line
$$ L = \{ (r_{1}t + \gamma_{1},\ldots,r_{n}t + \gamma_{n}): \, t
   \in \Gamma, \ t \geq 0 \} \ \ \text{with} \ \ r_{1},\ldots,r_{n}
   \in \mathbb{N}, \, r_{1} >0, $$
passing through a point $\gamma = (\gamma_{1},\ldots,\gamma_{n})
\in P$ and such that $\infty$ is an accumulation point of $\pi(P
\cap L)$ too.
\end{lemma}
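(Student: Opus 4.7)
The plan is to follow the line of argument used for Lemma~\ref{line-1}, and to reduce the general case to the case $n=2$ by induction on $n$. The case $n=1$ is nothing but Lemma~\ref{line-1} in one variable, since there $\pi$ is the identity and the two hypotheses coincide.

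\emph{Inductive step}, $n\ge 2$, assuming the case $n-1$: project $P$ onto the first $n-1$ coordinates to obtain a definable set $P'\subset\Gamma^{n-1}$ for which $\pi(P')=\pi(P)$ is still unbounded. The induction hypothesis produces $r_1,\ldots,r_{n-1}\in\mathbb{N}$ with $r_1>0$ and a point $(\gamma_1,\ldots,\gamma_{n-1})\in P'$ such that the ray
$$L'=\{(r_1t+\gamma_1,\ldots,r_{n-1}t+\gamma_{n-1}):t\ge 0\}$$
meets $P'$ in a set whose first projection is unbounded. To lift $L'$ to a semi-line in $\Gamma^n$ meeting $P$, form the auxiliary definable set
$$W=\{(t,x_n)\in\Gamma^2 : t\ge 0,\ (r_1t+\gamma_1,\ldots,r_{n-1}t+\gamma_{n-1},x_n)\in P\},$$
whose first projection is unbounded by construction. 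Applying the $n=2$ case of the lemma to $W$ yields $r_1',r_n'\in\mathbb{N}$ with $r_1'>0$ and $(t_0,\gamma_n)\in W$ such that $\{(r_1's+t_0,\,r_n's+\gamma_n):s\ge 0\}$ meets $W$ in a set with unbounded first coordinate. Composing with $L'$ then delivers a semi-line in $\Gamma^n$ with direction vector $(r_1r_1',\ldots,r_{n-1}r_1',r_n')\in\mathbb{N}^n$ and base point $(r_1t_0+\gamma_1,\ldots,r_{n-1}t_0+\gamma_{n-1},\gamma_n)\in P$, and one checks routinely that it satisfies all of the required conditions.

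The case $n=2$ is the heart of the proof. I would treat it directly, as for Lemma~\ref{line-1}, by invoking Cluckers--Halupczok's relative quantifier elimination for ordered abelian groups in the many-sorted language with imaginary auxiliary sorts \cite{C-H}. That result presents $P\subset\Gamma^2$ as a finite union of cells, each carved out by $\mathbb{Z}$-linear inequalities on the group coordinates together with conditions on imaginary auxiliary sorts encoding congruence-type constraints. Unboundedness of $\pi(P)$ forces at least one cell $C$ to have $\pi(C)$ unbounded, whence the polyhedral part of $C$ admits a recession direction with strictly positive first entry; multiplying such a direction by a suitable common period of the auxiliary congruence constraints renders it compatible with those constraints as well, and the resulting ray through any base point of $C$ meets $C$ cofinally in the first coordinate.

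The principal obstacle is the $n=2$ case---specifically, the combinatorial step of matching a single integer direction vector simultaneously to the polyhedral recession cone of the cell and to the periodic auxiliary data that survive quantifier elimination. The argument parallels the proof of Lemma~\ref{line-1} but enjoys more flexibility, since only $r_1>0$ is required while the remaining $r_i$ are allowed to vanish whenever the corresponding coordinates stay bounded along the cofinal trace.
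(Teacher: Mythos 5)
Your inductive reduction from general $n$ to the case $n=2$ is correct as a bookkeeping device: the auxiliary set $W$ inherits an unbounded first projection from $\pi(P'\cap L')$, the composed direction $(r_1r_1',\ldots,r_{n-1}r_1',r_n')$ has positive first coordinate, and the composed base point lies in $P$. But this reduction buys essentially nothing: the $n=2$ case is exactly as hard as the general case, since both come down to the same analysis of definable subsets of $\Gamma^n$ via the Cluckers--Halupczok quantifier elimination, which is what the paper's own (omitted) proof does directly by mimicking Lemma~\ref{line-1}. So the route differs in organization but not in substance, and the extra layer is not where the difficulty lies.

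There are two genuine gaps in the part that does carry the weight. First, the $n=2$ sketch compresses precisely the hard step: the auxiliary sorts in \cite{C-H} are imaginary quotients of the form $\Gamma/(\Delta+\ell\Gamma)$ for convex subgroups $\Delta$, not merely congruences modulo an integer, so the "common period" argument needs to be spelled out and is not routine. Second, and more concretely, your claim at the end of the inductive step that the composed direction lies in $\mathbb{N}^n$ does not follow from a recession-cone argument, and the lemma as literally stated (with $r_i\in\mathbb{N}$) is in fact false: take $\Gamma=\mathbb{Z}$ and $P=\{(x_1,x_2):x_1\geq 0,\ x_1+x_2=0\}$. Then $\pi(P)$ is unbounded, but for any semi-line with direction $(r_1,r_2)\in\mathbb{N}^2$, $r_1>0$, the condition $(r_1+r_2)t+(\gamma_1+\gamma_2)=0$ has at most one solution $t$, so the semi-line meets $P$ in at most one point. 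The correct conclusion is $r_i\in\mathbb{Z}$ with $r_1>0$, which is what a recession-cone argument actually yields and what the paper itself later uses when it speaks of an affine line "with integer coefficients." Your closing remark that "the remaining $r_i$ are allowed to vanish" overlooks coordinates tending to $-\infty$ along the cofinal trace, which is exactly what the counterexample exhibits.
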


The above two lemmas will be often used in further reasonings.

\vspace{1ex}

As for the theorem on existence of the limit, here we first prove,
using Theorem~\ref{term}, a weaker version given below. The full
analytic version (Proposition~\ref{limit-th2}) will be established
in Section~4 by means of the closedness theorem.

\begin{proposition}\label{limit-th1}
Let $f:E \to K$ be an $\mathcal{L}$-definable function on a subset
$E$ of $K$ and suppose $0$ is an accumulation point of $E$. Then
there is an $\mathcal{L}$-definable subsets $F \subset E$ with
accumulation point $0$ and a point $w \in \mathbb{P}^{1}(K)$ such
that
$$ \lim_{x \rightarrow 0}\, f|F\, (x) = w. $$
Moreover, we can require that
$$ \{ (x,f(x)): x \in F \} \subset \{ (x^{r}, \phi(x)): x \in G
   \}, $$
where $r$ is a positive integer and $\phi$ is a definable
function, a composite of some functions induced by series from
$\mathcal{A}$ and of some algebraic power series (coming from the
implicit function theorem). Then, in particular, the definable set
$$ \{ (v(x), v(f(x))): \; x \in (F \setminus \{0 \} \}
   \subset \Gamma \times (\Gamma \cup \{\infty \}) $$
is contained in an affine line with rational slope
$$ q \cdot l = p \cdot k + \beta,  $$
with $p,q \in \mathbb{Z}$, $q>0$, $\beta \in \Gamma$, or in\/
$\Gamma \times \{ \infty \}$.
\end{proposition}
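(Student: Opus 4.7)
The plan is to combine Theorem \ref{term} (term structure) with the b-minimal cell decomposition of Theorem \ref{ball} and with the Weierstrass theory of the germ rings $A_{p}^{loc}(K)$ developed in Section 3. First, apply Theorem \ref{term} to write $f(x) = t(x, g(x))$ with $g : E \to S$ taking values in an auxiliary sort $S$ and $t$ an $\mathcal{L}^{*}$-term. Because the auxiliary sort is controlled by $\mathrm{rv}$-classes, a b-minimal cell decomposition of the graph of $g$ partitions $E$ into finitely many $\mathcal{L}$-definable cells on each of which $g$ is constant; at least one such cell $F_{0}$ accumulates at $0$, and on it the restriction $f|_{F_{0}}$ is represented by a single $\mathcal{L}^{*}$-term $\tau(x)$ in the one valued-field variable $x$.

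Second, put this one-variable term into analytic form after a ramified substitution $x = y^{r}$. The term $\tau$ is a composition of the ring operations, the reciprocal, analytic symbols of $\mathcal{A}$ and finitely many Hensel symbols $h_{m}$. Each Hensel call returns the distinguished root of a polynomial whose coefficients are earlier subterms in $x$; with the auxiliary parameters now fixed, the chosen branch is uniquely determined. Applying Weierstrass preparation in the excellent regular factorial local ring $A_{1}^{loc}(K)$ inductively to these polynomial factors, and absorbing the fractional exponents arising from the finitely many Hensel branches into a single common denominator $r$, we obtain an $\mathcal{L}$-definable set $G \subset K^{\circ}$ accumulating at $0$ and a meromorphic germ $\phi$, a composite of analytic units from $\mathcal{A}$ and algebraic power series coming from Hensel via the implicit function theorem, such that, setting $F := \{ y^{r} : y \in G \} \subset F_{0}$, the graph of $f|_{F}$ is contained in $\{ (y^{r}, \phi(y)) : y \in G \}$.

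Third, read off the limit and the valuation statement from the structure of $\phi$. Either $\phi \equiv 0$, in which case $w := 0$ and the set of valuation pairs lies in $\Gamma \times \{\infty\}$, or by Weierstrass preparation in $A_{1}^{loc}(K)$ we have $\phi(y) = y^{s}\, u(y)$ for some $s \in \mathbb{Z}$ and a unit $u$ of $A_{1}^{loc}(K)$. In the latter case $v(u(y)) = v(u(0)) =: \gamma$ for all $y$ sufficiently close to $0$, so the relations $v(x) = r\, v(y)$ and $v(f(x)) = s\, v(y) + \gamma$ combine to give $r \cdot v(f(x)) = s \cdot v(x) + r\gamma$, the claimed affine line with rational slope (taking $q = r$, $p = s$, $\beta = r\gamma$). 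The limit $w \in \mathbb{P}^{1}(K)$ equals $0$, $u(0)$ or $\infty$ according as $s > 0$, $s = 0$ or $s < 0$.

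The main obstacle will be the second step, namely the passage from the $\mathcal{L}^{*}$-term $\tau$ to an honest composite of power series in a ramified variable $y$. This demands an inductive Weierstrass-preparation argument in $A_{1}^{loc}(K)$ to turn each Hensel subterm into a product of an analytic unit and a (possibly fractional) monomial, together with careful accounting to ensure that the finitely many resulting ramifications can be unified under a single common index $r$. The factorial property of $A_{p}^{loc}(K)$ proved in Section 3 is exactly what makes this inductive factorization well defined.
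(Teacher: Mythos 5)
Your first step (invoking Theorem~\ref{term}) matches the paper, but your second step contains a genuine gap that the paper's argument is specifically designed to bridge. You assert that a b-minimal cell decomposition of the graph of $g$ partitions $E$ into cells on which $g$ is \emph{constant}. This is false in general: the auxiliary-sort function $g$ can be something like $x \mapsto \mathrm{rv}(x)$ or $x \mapsto v(x)$, which is not piecewise constant on any set accumulating at $0$. B-minimal cell decomposition makes cells into families of balls or points \emph{fibered over} the auxiliary sort; it does not collapse the auxiliary map to a constant. In fact, for the Hensel subterm $h_m(a_0(x),\dots,a_m(x),(1,g_0(x)))$ treated in the paper, the $RV$-parameter $g_0(x)$ genuinely varies with $x$, which is exactly why $\tau(x)$ is not immediately an analytic germ in a ramified variable. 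Once this claim fails, your inductive Weierstrass-preparation step on the ``one-variable term $\tau$'' has nothing to apply to, because $\tau$ still depends on the non-constant $g$.

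The paper's proof closes this gap in two moves you omit. First, it uses the existence of an angular component map (Remark~\ref{angular}) to split $RV(K) \simeq \widetilde{K}^\times \times \Gamma$ and, by orthogonality, reduces to $\overline{ac}(F)=\{1\}$ and $g$ landing in $\{(1,\dots,1)\}\times\Gamma^s$. Second, and crucially, it applies Lemma~\ref{line-2} (a consequence of Cluckers--Halupczok relative quantifier elimination for ordered abelian groups) to shrink $F$ so that the values $(v(x), g_0(x))$ lie on an affine semi-line with integer coefficients, $p\,v(x) + q\,g_0(x) + v(a) = 0$. Only this linear constraint allows one to absorb the varying $RV$-argument of $h_m$ into the ramified substitution $x = c^r t^{Nqr}$ and then perform the homothetic change of variable $Z = t^{Npr}T/d$, after which Hensel's lemma yields a genuine analytic branch and the implicit function theorem finishes the job. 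Your appeal to the factoriality of $A_1^{loc}(K)$ is not the engine here; the engine is Lemma~\ref{line-2}, and without it the argument does not go through.
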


\begin{proof}
In view of Remark~\ref{ext-par}, we may assume that $K$ has
separated analytic $\mathcal{A}(K)$-structure. We apply
Theorem~\ref{term} and proceed with induction with respect to the
complexity of the term $t$. Since an angular component map exists
(cf.~Remark~\ref{angular}), the sorts $\widetilde{K}$ and $\Gamma$
are orthogonal in
$$ RV(K) \simeq \widetilde{K} \times \Gamma_{K}. $$
Therefore, after shrinking $F$, we can assume that
$\overline{ac}\: (F) = \{1 \}$ and the function $g$ goes into $ \{
\xi \} \times \Gamma^{s} $ with a $\xi \in \widetilde{K}^{s}$, and
next that $\xi = (1,\ldots,1)$; similar reductions were considered
in our papers~\cite{Now-Sel,Now-thm}. For simplicity, we look at
$g$ as a function into $\Gamma^{s}$. We shall briefly explain the
most difficult case where
$$ t(x,g(x)) = h_{m}(a_{0}(x),\ldots,a_{m}(x),(1,g_{0}(x))), $$
assuming that the theorem holds for the terms
$a_{0},\ldots,a_{m}$; here $g_{0}$ is one of the components of
$g$. In particular, each function  $a_{i}(x)$ has, after
partitioning, a limit, say, $a_{i}(0)$ when $x$ tends to zero.

\vspace{1ex}

By Lemma~\ref{line-2}, we can assume that
\begin{equation}\label{eq-limit-1}
p v(x) + q g_{0}(x) + v(a) = 0
\end{equation}
for some $p,q \in \mathbb{N}$ and $a \in K \setminus \{ 0 \}$. By
the induction hypothesis, we get
$$ \{ (x,a_{i}(x)): x \in F \} \subset \{ (x^{r}, \alpha_{i}(x)): x \in
   G \}, \ \ \ i = 0,1,\ldots,m, $$
for some power series $\alpha_{i}(x))$ as stated in the theorem.
Put
$$ P(x,T) := \sum_{i=0}^{m} \, a_{i}(x) T^{i}. $$
By the very definition of $h_{m}$ and since we are interested in
the vicinity of zero, we may assume that there is an
$i_{0}=0,\ldots,m$ such that
$$ \forall \: x \in F \ \exists \: u \in K \ \
   v(u) = g_{0}(x), \ \ \overline{ac} \: u =1, $$
and the following formulas hold
\begin{equation}\label{eq-limit-2}
  v(a_{i_{0}}(x)u^{i_{0}}) = \min \, \{ v(a_{i}(x)u^{i}), \
i=0,\ldots,m \},
\end{equation}
$$ v(P(x,u)) > v(a_{i_{0}}(x)u^{i_{0}}), \ \ v \left( \frac{\partial
   \, P}{\partial \, T} (x,u) \right) = v(a_{i_{0}}(x)u^{i_{0}-1}). $$
Then $h_{m}(a_{0}(x),\ldots,a_{m}(x),(1,g_{0}(x)))$ is a unique
$b(x) \in K$ such that
$$ P(x,b(x))=0, \ \ v(b(x)) = g_{0}(x), \ \ \overline{ac} \: b(x)
   =1. $$
By \cite[Remarks~7.2, 7.3]{Now-thm}, the set $F$ contains the set
of points of the form $c^{r}t^{Nqr}$ for some $c \in K$ with
$\overline{ac} \: c=1$, a positive integer $N$ and all $t \in
K^{\circ}$ small enough with $\overline{ac} \: t =1$. Hence and by
equation~\ref{eq-limit-1}, we get
$$ g_{0}(c^{r}t^{Nqr}) = g_{0}(c^{r}) - v(t^{Npr}). $$
Take $d \in K$ such that $g_{0}(c^{r}) =v(d)$ and $\overline{ac}
\: d=1$. Then
$$ g_{0}(c^{r}t^{Nqr}) = v(dt^{-Npr}). $$
Thus the homothetic change of variable
$$ Z = T/dt^{-Npr} = t^{Npr}T/d $$
transforms the polynomial
$$ P(c^{r}t^{Nqr},T) = \sum_{i=0}^{m} \, \alpha_{i}(ct^{Nq}) T^{i} $$
into a polynomial $Q(t,Z)$ to which Hensel's lemma applies
(cf.~\cite[Lemma~3.5]{Pa1}):

\begin{equation}
  P(c^{r}t^{Nqr},T) = P(c^{r}t^{Nqr},dt^{-Npr}Z) =
  \end{equation}
$$ \sum_{i=0}^{m} \, \alpha_{i}(ct^{Nq}) \cdot ( dt^{-Npr} Z)^{i} =
   (\alpha_{i_{0}}(ct^{Nq}) \cdot (dt^{-Npr})^{i_{0}} \cdot Q(t,Z).
$$
Indeed, formulas~\ref{eq-limit-2} imply that the coefficients
$b_{i}(t)$, $i=0,\ldots,m$, of the polynomial $Q$ are power series
(of order $\geq 0$) in the variable $t$, and that
$$ v(Q(t,1)) > 0 \ \ \ \text{and} \ \ \ v\left( \frac{\partial \,
   Q}{\partial \, Z} (t,1) \right) = 0 $$
fot $t \in K^{0}$ small enough. Hence
$$ v(Q(0,1)) > 0 \ \ \ \text{and} \ \ \ v\left( \frac{\partial \,
   Q}{\partial \, Z} (0,1) \right) = 0. $$
But, for $x(t) = c^{r}t^{Nqr}$, the unique zero $T(t) = b(x(t))$
of the polynomial $P(x(t),T)$ such that
$$ v(b(x(t))) = v(dt^{-Npr}) \ \ \text{and} \ \ \overline{ac}\: b(x(t)) =1 $$
corresponds to a unique zero $Z(t)$ of the polynomial $Q(t,Z)$
such that
$$ v(Z(t)) = v(1) \ \ \text{and} \ \ \overline{ac}\: Z(t) =1. $$
Therefore the conclusion of the theorem can be directly obtained
via the implicit function theorem
(cf.~\cite[Proposition~~2.5]{Now-thm}) applied to the polynomial
$$ P(A_{0},\ldots,A_{m},Z) = \sum_{i=0}^{m} \, A_{i} Z^{i} $$
in the variables $A_{i}$ substituted for $a_{i}(x)$ at the point
$$ A_{0}=b_{0}(0),\, \ldots,\, A_{m}=b_{m}(0),\, Z=1. $$
\end{proof}


Now we can readily proceed with the

\vspace{1ex}

{\em Proof of the closedness theorem} (Theorem~\ref{clo-th}). We
must show that if $B$ is an $\mathcal{L}$-definable subset of $D
\times (K^{\circ})^{n}$ and a point $a$ lies in the closure of $A
:= \pi(B)$, then there is a point $b$ in the closure of $B$ such
that $\pi(b)=a$. As before (cf.~\cite[Section~~8]{Now-thm}), the
theorem reduces easily to the case $m=1$ and next, by means of
fiber shrinking (Proposition~\ref{FS}), to the case $n=1$. We may
obviously assume that $a = 0 \not \in A$.

\vspace{1ex}

By b-minimal cell decomposition, we can assume that the set $B$ is
a relative cell with center over $A$. It means that $B$ has a
presentation of the form
$$ \Lambda: B \ni (x,y) \to (x,\lambda(x,y)) \in  A \times RV(K)^{s}, $$
where $\lambda: B \to RV(K)^{s}$ is an $\mathcal{L}$-definable
function, such that for each $(x,\xi) \in \Lambda (B)$ the
pre-image $\lambda_{x}^{-1}(\xi) \subset K$ is either a point or
an open ball; here $\lambda_{x}(y) := \lambda(x,y)$. In the latter
case, there is a center, i.e.\ an $\mathcal{L}$-definable map
$\zeta: \Lambda(B) \to K$, and a (unique) map $\rho: \Lambda (B)
\to RV(K) \setminus \{ 0 \}$ such that
$$ \lambda_{x}^{-1}(\xi) = \{ y \in K: rv\, (y - \zeta (x,\xi)) =
   \rho (x,\xi) \} . $$
Again, since the sorts $\widetilde{K}$ and $\Gamma$ are orthogonal
in $RV(K) \simeq \widetilde{K} \times \Gamma$, we can assume,
after shrinking the sets $A$ and $B$, that
$$ \lambda(B) \subset \{ (1,\ldots,1) \} \times \Gamma^{s} \subset
   \widetilde{K}^{s} \times \Gamma^{s}; $$
let $\tilde{\lambda}(x,y)$ be the projection of $\lambda(x,y)$
onto $\Gamma^{s}$. By Lemma~\ref{line-2}, we can assume once
again, after shrinking the sets $A$ and $B$, that the set
$$ \{ (v(x),v(y),\tilde{\lambda}(x,y)): \; (x,y) \in B \} \subset
   \Gamma^{s+2} $$
is contained in an affine semi-line with integer coefficients.
Hence $\lambda(x,y) = \phi(v(x))$ is a function of one variable
$x$. We have two cases.

\vspace{1ex}

{\em Case I.} $\lambda_{x}^{-1}(\xi) \subset K^{\circ}$ is a
point. Since each $\lambda_{x}$ is a constant function, $B$ is the
graph of an $\mathcal{L}$-definable function. The conclusion of
the theorem follows thus from Proposition~\ref{limit-th1}.

\vspace{1ex}

{\em Case II.} $\lambda_{x}^{-1}(\xi) \subset K^{\circ}$ is a
ball. Again, application of Lemma~\ref{line-2} makes it possible,
after shrinking the sets $A$ and $B$, to arrange the center
$$ \zeta: \Lambda(B) \ni (x, \xi) \to \zeta(x, v(x)) = \zeta(x) \in K $$
and the function $\rho(x, \xi) = \rho(v(x))$ as functions of one
variable $x$. Likewise as it was above, we can assume that the set
$$ P := \{ (v(x), \rho(v(x))) : x \in A \} \subset \Gamma^{2} $$
is contained in an affine line $p v(x) +q \rho(v(x)) + v(c) =0$
with integer coefficients $p,q$, $q \neq 0$; furthermore, that $P$
contains the set
$$ Q := \{ (v(ct^{qN}), \rho(v(ct^{qN}))): \; t \in K^{\circ} \}
$$
for a positive integer $N$. Then we easily get
$$ \rho(v(ct^{qN})) = \rho(c) - pN v(t) = v(ct^{-pN}). $$
Hence the set $B$ contains the graph
$$ \{ (ct^{qN}, \zeta(ct^{qN}) + ct^{-pN}) : \; t \in K^{\circ} \}. $$
As before, the conclusion of the theorem follows thus from
Proposition~\ref{limit-th1}, and the proof is complete.
\hspace{\fill} $\Box$

\vspace{1ex}

\section{Direct applications}

The framework of b-minimal structures provides cell decomposition
and a good concept of dimension (cf.~\cite{C-L}), which in
particular satisfies the axioms from the paper~\cite{Dries-dim}.
For separated analytic structures, the zero-dimensional sets are
precisely the finite sets, and also valid is the following
dimension inequality, which is of great geometric significance:
\begin{equation}\label{ineq}
\dim \, \partial E < \dim E;
\end{equation}
here $E$ is any $\mathcal{L}$-definable subset of $K^{n}$ and
$\partial E := \overline{E} \setminus E$ denotes the frontier of
$A$.

\vspace{1ex}

We first apply the closedness theorem to obtain the following full
analytic version of the theorem on existence of the limit.

\begin{proposition}\label{limit-th2}
Let $f: E \to \mathbb{P}^{1}(K)$ be an $\mathcal{L}$-definable
function on a subset $E$ of $K$, and suppose that $0$ is an
accumulation point of $E$. Then there is a finite partition of $E$
into $\mathcal{L}$-definable sets $E_{1},\ldots,E_{r}$ and points
$w_{1}\ldots,w_{r} \in \mathbb{P}^{1}(K)$ such that
$$ \lim_{x \rightarrow 0}\, f|E_{i}\, (x) = w_{i} \ \ \ \text{for} \ \
   i=1,\ldots,r. $$
\end{proposition}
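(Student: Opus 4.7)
The plan is to apply the closedness theorem, in the form of Corollary~\ref{clo-th-cor-0}, to the closure of the graph of $f$ inside $K \times \mathbb{P}^{1}(K)$: this identifies the fiber over $0$ with the set of accumulation values of $f$ at $0$, and a dimension count forces that fiber to be finite. One then partitions $E$ by pulling back small disjoint clopen balls around those finitely many values.

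Concretely, let $\Gamma := \{ (x, f(x)) : x \in E \} \subset K \times \mathbb{P}^{1}(K)$ and $C := \overline{\Gamma}$ in the $K$-topology. By Corollary~\ref{clo-th-cor-0} the projection $\pi : K \times \mathbb{P}^{1}(K) \to K$ is definably closed, so $\pi(C) = \overline{E}$ and the fiber
$$ F_{0} := \{ w \in \mathbb{P}^{1}(K) : (0,w) \in C \} $$
is a non-empty closed $\mathcal{L}$-definable subset of $\mathbb{P}^{1}(K)$; it consists of $f(0)$ (if $0 \in E$) together with all accumulation values of $f$ at $0$ along $E \setminus \{0\}$. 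Since $0$ is an accumulation point of $E$, the set $E$ is one-dimensional (zero-dimensional $\mathcal{L}$-definable subsets of $K$ being finite and hence having no accumulation points). As $\pi$ restricts to a bijection $\Gamma \to E$, we have $\dim \Gamma = 1$, so the dimension inequality~\eqref{ineq} yields $\dim \partial \Gamma \leq 0$, and consequently $\partial \Gamma$, and hence $F_{0} = \{ w_{1},\ldots,w_{r} \}$, is finite.

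Next, I choose pairwise disjoint clopen balls $B_{i} \subset \mathbb{P}^{1}(K)$ with $w_{i} \in B_{i}$, set $E_{i} := f^{-1}(B_{i}) \cap E$ for $i = 1,\ldots,r$, and take a leftover piece $E_{r+1} := E \setminus \bigsqcup_{i=1}^{r} E_{i}$. Because $V := \mathbb{P}^{1}(K) \setminus \bigsqcup_{i} B_{i}$ is closed and disjoint from $F_{0}$, applying Corollary~\ref{clo-th-cor-0} to the graph of $f|E_{r+1}$ inside $K \times V$ shows $0 \notin \overline{E_{r+1}}$, so the limit requirement on $E_{r+1}$ is vacuously met for any chosen $w_{r+1}$. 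To verify $\lim_{x \to 0} f|E_{i}(x) = w_{i}$, pick any clopen neighborhood $U \subset B_{i}$ of $w_{i}$: then $B_{i} \setminus U$ is closed in $\mathbb{P}^{1}(K)$ and disjoint from $F_{0}$, so the same closedness argument pushes $f^{-1}(B_{i} \setminus U) \cap E$ outside a neighborhood of $0$, forcing $f|E_{i}$ to take values in $U$ on a neighborhood of $0$ in $E_{i}$.

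The main obstacle is the finiteness of $F_{0}$, which is exactly what upgrades Proposition~\ref{limit-th1} (one piece) to a genuine finite partition; this is secured by combining the closedness theorem (to realize $F_{0}$ as a fiber of $\overline{\Gamma}$) with the dimension inequality~\eqref{ineq} (to see that this fiber sits in a zero-dimensional, hence finite, frontier). The remaining steps are routine topological arguments exploiting the clopen basis of the $K$-topology on $\mathbb{P}^{1}(K)$.
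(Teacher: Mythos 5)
Your proposal is correct and follows essentially the same route as the paper: form the graph of $f$ in $K \times \mathbb{P}^1(K)$, use the closedness theorem (Corollary~\ref{clo-th-cor-0}) together with the dimension inequality~\eqref{ineq} to conclude that the accumulation set over $0$ is a finite set $\{w_1,\ldots,w_r\}$, then partition $E$ by disjoint neighborhoods of the $w_i$ and invoke the closedness theorem once more to verify each limit. The only cosmetic differences are that you explicitly allow $0 \in E$ and keep a leftover piece $E_{r+1}$, whereas the paper assumes $0 \notin E$ and absorbs the leftover into an irrelevant $E_0$.
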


\begin{proof}

We may of course assume that $0 \not \in E$. Put
$$ F := \mathrm{graph}\, (f) = \{ (x, f(x): x \in E \} \subset K
   \times \mathbb{P}^{1}(K); $$
obviously, $F$ is of dimension $1$. It follows from the closedness
theorem that the frontier $\partial F \subset K \times
\mathbb{P}^{1}(K)$ is non-empty, and thus of dimension zero by
inequality~\ref{ineq}. Say
$$ \partial F \cap (\{ 0 \} \times \mathbb{P}^{1}(K)) = \{
   (0,w_{1}), \ldots, (0,w_{r}) \} $$
for some $w_{1},\ldots,w_{r} \in \mathbb{P}^{1}(K).$ Take pairwise
disjoint neighborhoods $U_{i}$ of the points $w_{i}$,
$i=1,\ldots,r$, and set
$$ F_{0} := F \cap \left(E \times \left( \mathbb{P}^{1}(K) \setminus \bigcup_{i}^{r}
   E_{i} \right) \right). $$
Let
$$ \pi: K \times \mathbb{P}^{1}(K) \longrightarrow K $$
be the canonical projection. Then
$$ E_{0} := \pi (F_{0}) = f^{-1}\left( \mathbb{P}^{1}(K) \setminus \bigcup_{i}^{r}
   E_{i} \right). $$
Clearly, the closure $\overline{F}_{0}$ of $F_{0}$ in $K \times
\mathbb{P}^{1}(K))$ and $\{ 0 \} \times \mathbb{P}^{1}(K))$ are
disjoint. Hence and by the closedness theorem, $0 \not \in
\overline{E_{0}}$, the closure of $E_{0}$ in $K$. The set $E_{0}$
is thus irrelevant with respect to the limit at $0 \in K$.
Therefore it remains to show that
$$ \lim_{x \rightarrow 0}\, f|E_{i}\, (x) = w_{i} \ \ \ \text{for} \ \
   i=1,\ldots,r. $$
Otherwise there is a neighborhood $V_{i} \subset U_{i}$ such that
$0$ would be an accumulation point of the set
$$ f^{-1}(U_{i} \setminus V_{i}) = \pi (F \cap (E
   \times (U_{i} \setminus V_{i}))). $$
Again, it follows from the closedness theorem that $\{ 0 \} \times
\mathbb{P}^{1}(K)$ and the closure of $F \cap (E \times (U_{i}
\setminus V_{i}))$ in $K \times \mathbb{P}^{1}(K))$ would not be
disjoint. This contradiction finishes the proof.
\end{proof}

\begin{remark}
Let us mention that Proposition~\ref{limit-th2} can be
strengthened as stated below (cf.\ the algebraic versions
\cite[Proposition~5.2]{Now-Sel} and \cite[Theorem~5.1]{Now-thm}):

\vspace{1ex}

\begin{em}
Moreover, perhaps after refining the finite partition of $E$,
there is a neighbourhood $U$ of $0$ such that each definable set
$$ \{ (v(x), v(f(x))): \; x \in (E_{i} \cap U) \setminus \{0 \} \}
   \subset \Gamma \times (\Gamma \cup \ \{
   \infty \}),  \ i=1,\ldots,r, $$
is contained in an affine line with rational slope
$$ q \cdot l = p_{i} \cdot k + \beta_{i}, \ i=1,\ldots,r, $$
with $p_{i},q \in \mathbb{Z}$, $q>0$, $\beta_{i} \in \Gamma$, or
in\/ $\Gamma \times \{ \infty \}$.
\end{em}
\end{remark}

Now we turn to a second application, namely the following theorem
on piecewise continuity.

\begin{theorem}\label{piece}
Let $A \subset K^{n}$ and $f: A \to \mathbb{P}^{1}(K)$ be an
$\mathcal{L}$-definable function. Then $f$ is piecewise
continuous, i.e.\ there is a finite partition of $A$ into
$\mathcal{L}$-definable locally closed subsets
$A_{1},\ldots,A_{s}$ of $K^{n}$ such that the restriction of $f$
to each $A_{i}$ is continuous.
\end{theorem}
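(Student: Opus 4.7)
The plan is to argue by induction on $d := \dim A$, where $\dim$ is the b-minimal dimension. The case $d = 0$ is immediate since $A$ is then finite, and each singleton is locally closed with $f$ continuous on it. For the inductive step I would work with the graph
$$ F := \{(x, f(x)) : x \in A\} \subset K^{n} \times \mathbb{P}^{1}(K), $$
an $\mathcal{L}$-definable set of dimension $d$, and compare $F$ with its closure $\overline{F}$ in $K^{n} \times \mathbb{P}^{1}(K)$. By the dimension inequality~\eqref{ineq}, $\dim(\overline{F} \setminus F) < d$.

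The key step is to identify the set of discontinuity points of $f$ in $A$ with the $\mathcal{L}$-definable set
$$ D := \{a \in A : \overline{F} \cap (\{a\} \times \mathbb{P}^{1}(K)) \neq \{(a, f(a))\} \}. $$
Equivalently, $a \in D$ iff there exists $b \neq f(a)$ with $(a,b) \in \overline{F} \setminus F$. The implication ``$a \notin D \Rightarrow f$ continuous at $a$'' is routine. For the converse, suppose $f$ is discontinuous at $a$; then there is an open neighbourhood $U$ of $f(a)$ in $\mathbb{P}^{1}(K)$ such that $a$ is an accumulation point of $E := f^{-1}(\mathbb{P}^{1}(K) \setminus U)$. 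Apply Corollary~\ref{clo-th-cor-0} to the closure in $K^{n} \times \mathbb{P}^{1}(K)$ of the $\mathcal{L}$-definable set $F \cap (K^{n} \times (\mathbb{P}^{1}(K) \setminus U))$: its image under the projection $K^{n} \times \mathbb{P}^{1}(K) \to K^{n}$ is closed and contains $E$, hence contains $\overline{E} \ni a$. Since $\mathbb{P}^{1}(K) \setminus U$ is closed, the closure remains inside $K^{n} \times (\mathbb{P}^{1}(K) \setminus U)$, producing some $(a,b) \in \overline{F}$ with $b \notin U$, so $b \neq f(a)$. Hence $a \in D$.

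With this identification, $D$ is contained in the image of $(\overline{F} \setminus F) \cap (A \times \mathbb{P}^{1}(K))$ under the first projection, and since images under definable maps do not increase the b-minimal dimension, $\dim D < d$. By the inductive hypothesis applied to $f|_D$, there is a finite partition $D = D_{1} \sqcup \cdots \sqcup D_{r}$ into $\mathcal{L}$-definable locally closed subsets of $K^{n}$ on which $f$ is continuous. For $A \setminus D$, on which $f$ is continuous by the claim, I would invoke b-minimal cell decomposition to partition it into finitely many locally closed $\mathcal{L}$-definable cells $E_{1}, \ldots, E_{s}$; continuity of $f|_{E_{j}}$ is inherited because a function continuous at $a$ with respect to the ambient subspace topology of $A$ remains continuous with respect to any subset topology. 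Concatenating the two partitions gives the desired decomposition.

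The main obstacle is the key step identifying $D$ with the discontinuity locus: in the non-Archimedean setting, without a compactness-like property, a discontinuity at $a$ need not produce a limit point of the graph above $a$, and it is precisely the definable closedness of $\pi: K^{n} \times \mathbb{P}^{1}(K) \to K^{n}$ supplied by Corollary~\ref{clo-th-cor-0} that forces the existence of such a limit point with second coordinate in the closed complement of $U$. A secondary technical point is the passage from the definable set $A \setminus D$ to a partition into locally closed pieces, which is standard but tacitly uses b-minimal cell decomposition together with the frontier inequality~\eqref{ineq}.
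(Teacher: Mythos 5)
Your proof is correct and is essentially the paper's argument in slightly different packaging: the paper reduces to the case of a locally closed graph $E$, sets $B := \pi(\overline{E}) \setminus \pi(\overline{E}\setminus E)$ (which is then locally closed), proves continuity of $f|_B$ by observing that $\pi\colon \overline{E}\cap(B\times\mathbb{P}^1(K))\to B$ is a closed bijection, and inducts on $A\setminus B$; your $D$ is exactly $A\setminus B$, you prove continuity on $A\setminus D$ via the one-limit-point characterization instead, and you postpone the locally-closed bookkeeping to a cell decomposition at the end — the same closedness-theorem-plus-frontier-inequality mechanism throughout. One small slip in the write-up: the implication ``$a\notin D\Rightarrow f$ continuous at $a$'' is \emph{not} routine and is precisely what your closedness-theorem argument proves (your ``for the converse, suppose $f$ is discontinuous at $a$'' is its contrapositive, not its converse); the genuinely routine direction ``$a\in D\Rightarrow f$ discontinuous at $a$'' is in fact not needed, since all you use is that $D$ contains every discontinuity point together with $D\subset\pi\bigl((\overline F\setminus F)\cap(A\times\mathbb{P}^1(K))\bigr)$.
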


\begin{proof}
Consider an $\mathcal{L}$-definable function $f: A \to
\mathbb{P}^{1}(K)$ and its graph
$$ E := \{ (x,f(x)): x \in A \} \subset K^{n} \times \mathbb{P}^{1}(K). $$
We shall proceed with induction with respect to the dimension
$$ d = \dim A = \dim \, E $$
of the source and graph of $f$.

\vspace{1ex}

Observe first that every $\mathcal{L}$-definable subset $E$ of
$K^{n}$ is a finite disjoint union of locally closed
$\mathcal{L}$-definable subsets of $K^{n}$. This can be easily
proven by induction on the dimension of $E$ by means of
inequality~\ref{ineq}. Therefore we can assume that the graph $E$
is a locally closed subset of $K^{n} \times \mathbb{P}^{1}(K)$ of
dimension $d$ and that the conclusion of the theorem holds for
functions with source and graph of dimension $< d$.

\vspace{1ex}

Let $F$ be the closure of $E$ in $K^{n} \times \mathbb{P}^{1}(K)$
and $\partial E := F \setminus E$ be the frontier of $E$. Since
$E$ is locally closed, the frontier $\partial E$ is a closed
subset of $K^{n} \times \mathbb{P}^{1}(K)$ as well. Let
$$ \pi: K^{n} \times \mathbb{P}^{1}(K) \longrightarrow K^{n} $$
be the canonical projection. Then, by virtue of the closedness
theorem, the images $\pi(F)$ and $\pi(\partial E)$ are closed
subsets of $K^{n}$. Further,
$$ \dim \, F = \dim \, \pi(F) = d $$
and
$$ \dim \, \pi(\partial E) \leq \dim \, \partial E < d; $$
the last inequality holds by inequality~\ref{ineq}. Putting
$$ B := \pi(F) \setminus \pi(\partial E) \subset \pi(E) = A, $$
we thus get
$$ \dim \, B = d \ \ \text{and} \ \ \dim \, (A \setminus B) < d.
$$
Clearly, the set
$$ E_{0} := E \cap (B \times \mathbb{P}^{1}(K)) = F \cap (B \times
   \mathbb{P}^{1}(K)) $$
is a closed subset of $B \times \mathbb{P}^{1}(K)$ and is the
graph of the restriction
$$ f_{0}: B \longrightarrow \mathbb{P}^{1}(K) $$
of $f$ to $B$. Again, it follows immediately from the closedness
theorem that the restriction
$$ \pi_{0} : E_{0} \longrightarrow B $$
of the projection $\pi$ to $E_{0}$ is a definably closed map.
Therefore $f_{0}$ is a continuous function. But, by the induction
hypothesis, the restriction of $f$ to $A \setminus B$ satisfies
the conclusion of the theorem, whence so does the function $f$.
This completes the proof.
\end{proof}

We immediately obtain

\begin{corollary}
The conclusion of the above theorem holds for any
$\mathcal{L}$-definable function $f: A \to K$.
\end{corollary}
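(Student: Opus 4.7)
The plan is to deduce this immediately from Theorem~\ref{piece} by regarding the target $K$ as an open subspace of $\mathbb{P}^{1}(K)$. Concretely, compose $f$ with the standard inclusion $K \hookrightarrow \mathbb{P}^{1}(K)$, $x \mapsto [x:1]$, to obtain an $\mathcal{L}$-definable function $\tilde{f}: A \to \mathbb{P}^{1}(K)$ whose image avoids the point at infinity. Applying Theorem~\ref{piece} to $\tilde{f}$ yields a finite partition of $A$ into $\mathcal{L}$-definable locally closed subsets $A_{1},\ldots,A_{s}$ of $K^{n}$ such that each restriction $\tilde{f}|_{A_{i}}: A_{i} \to \mathbb{P}^{1}(K)$ is continuous.

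It then remains only to observe that continuity of $\tilde{f}|_{A_{i}}$ into $\mathbb{P}^{1}(K)$ is equivalent to continuity of $f|_{A_{i}}$ into $K$. This is because the $K$-topology on $K$ coincides with the subspace topology inherited from the natural inclusion $K \subset \mathbb{P}^{1}(K)$ (the affine chart $K = \mathbb{P}^{1}(K) \setminus \{\infty\}$ is open in $\mathbb{P}^{1}(K)$), and the image of each $\tilde{f}|_{A_{i}}$ lies entirely in this open chart. Hence each $f|_{A_{i}}: A_{i} \to K$ is continuous, and the same partition witnesses the conclusion of Theorem~\ref{piece} for the original $f$.

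There is no real obstacle here; the entire content of the corollary is absorbed by Theorem~\ref{piece} together with the trivial topological remark that $K$ is an open subset of $\mathbb{P}^{1}(K)$ with the subspace topology. No additional use of the closedness theorem or of any further definability arguments is required.
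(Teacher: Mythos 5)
Your argument is correct and is exactly the intended deduction: the paper states the corollary as an immediate consequence of Theorem~\ref{piece}, and the justification is precisely the observation that $K$ sits as an open subspace of $\mathbb{P}^{1}(K)$ so continuity of each restriction into $\mathbb{P}^{1}(K)$ is equivalent to continuity into $K$. Nothing further is needed.
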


\vspace{1ex}

\section{The \L{}ojasiewicz inequalities}

Algebraic non-Archimedean versions of the \L{}ojasiewicz
inequality, established in our papers~\cite{Now-Sel,Now-thm}, can
be carried over to the analytic settings considered here with
proofs repeated almost verbatim. We thus state only the results
(Theorems 11.2, 11.5 and 11.6, Proposition~11.3 and Corollary~11.4
from~\cite{Now-thm}). Let us mention that the main ingredients of
the proof are the closedness theorem, elimination of valued field
quantifiers, the orthogonality of the auxiliary sorts and relative
quantifier elimination for ordered abelian groups. They allow us
to reduce the problem under study to that of piecewise linear
geometry. We first state the following version, which is closest
to the classical one.

\begin{theorem}\label{Loj1}
Let $f,g_{1},\ldots,g_{m}: A \to K$ be continuous
$\mathcal{L}$-definable functions on a closed (in the
$K$-topology) bounded subset $A$ of $K^{m}$. If
$$ \{ x \in A: g_{1}(x)= \ldots =g_{m}(x) =0 \} \subset \{ x \in A: f(x)=0 \}, $$
then there exist a positive integer $s$ and a constant $\beta \in
\Gamma$ such that
$$ s \cdot v(f(x)) + \beta \geq v((g_{1}(x), \ldots ,g_{m}(x))) $$
for all $x \in A$. Equivalently, there is a $C \in |K|$ such that
$$ | f(x) |^{s} \leq C \cdot \max \, \{ | g_{1}(x) |, \ldots , |
   g_{m}(x) | \} $$
for all $x \in A$.
\end{theorem}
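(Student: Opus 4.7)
The plan is to mirror the strategy from the algebraic case in \cite{Now-thm}, combining four ingredients: the closedness theorem (Theorem~\ref{clo-th}), elimination of valued field quantifiers (Theorem~\ref{ball}), orthogonality of the auxiliary sorts $\widetilde{K}$ and $\Gamma$, and relative quantifier elimination for ordered abelian groups due to Cluckers--Halupczok. By Remark~\ref{angular}, I may pass to an elementary extension and assume $K$ has an angular component map, so that $RV^{*}(K)\simeq \widetilde K^{\times}\times \Gamma$ splits.

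First, I reduce to a single auxiliary function by putting $G(x):=\max_{i}|g_{i}(x)|$; this is an $\mathcal{L}$-definable continuous function on the closed bounded set $A$, and the desired inequality becomes $s\cdot v(f(x))+\beta\geq v(G(x))$ for all $x\in A$. I then consider the definable set
$$ P\;:=\;\{(v(f(x)),\,v(G(x))):\ x\in A\}\ \subset\ (\Gamma\cup\{\infty\})\times(\Gamma\cup\{\infty\}). $$
By valued-field quantifier elimination combined with orthogonality, $P$ is definable using only the language of ordered abelian groups with parameters in $\Gamma$; by Cluckers--Halupczok it therefore admits a piecewise linear description in terms of affine inequalities with integer coefficients and constants from $\Gamma$.

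The topological input is the following uniformity: for every $M\in\Gamma$ there exists $N\in\Gamma$ such that $v(G(x))\geq N\Rightarrow v(f(x))\geq M$. This is where the closedness theorem enters, through Corollary~\ref{clo-th-cor-1}. Indeed, if it failed, then the $\mathcal{L}$-definable set $\{x\in A:\ v(G(x))\geq N,\ v(f(x))<M\}$ would be non-empty for every $N$, so by fiber shrinking (Proposition~\ref{FS}) one would obtain a definable curve inside $A$ along which $G\to 0$ but $v(f)$ stays $<M$; the closedness theorem then produces an accumulation point $x_{0}\in A$ with $G(x_{0})=0$ and $f(x_{0})\neq 0$, contradicting the hypothesis.

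It remains to prove the following purely piecewise-linear statement: any definable $P\subset(\Gamma\cup\{\infty\})^{2}$ satisfying the uniformity of the previous step is contained in some half-plane $\{(k,l):\ l\leq sk+\beta\}$ with $s\in\mathbb{N}_{>0}$ and $\beta\in\Gamma$. Suppose not. Then the definable set $Q:=\{(k,l)\in P:\ l-sk>\beta\}$ is non-empty for every $s\in\mathbb{N}$ and every $\beta\in\Gamma$, so in particular the second coordinate on $Q$ accumulates at $\infty$ while the ratio $l/k$ is unbounded. Applying Lemma~\ref{line-2} to a suitable projection, I extract an affine semi-line with integer coefficients inside $P$ on which $v(G)\to\infty$ but $v(f)$ either stays bounded or grows arbitrarily more slowly than $v(G)$; both options directly contradict the uniformity from the previous step.

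The principal obstacle is the bookkeeping in this last piecewise-linear reduction: one must make sure that the witness provided by Lemma~\ref{line-2} actually lies in $P$ (and hence comes from points of $A$), so that the uniformity can be invoked to close the contradiction. Once the set-up above is in place, this is essentially the same argument as in \cite{Now-thm}, with the algebraic cell decomposition replaced by the b-minimal cell decomposition and term structure of Theorems~\ref{ball} and~\ref{term}; no new ideas beyond those assembled in Sections~2--4 are needed.
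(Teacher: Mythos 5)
Your proposal assembles exactly the toolkit the paper invokes for this result (the closedness theorem, elimination of valued field quantifiers, orthogonality of the auxiliary sorts, and Cluckers--Halupczok relative quantifier elimination for ordered abelian groups), and the overall route --- reduce to $G=\max_i|g_i|$, pass to the definable image $P\subset(\Gamma\cup\{\infty\})^2$ of $(v(f),v(G))$, establish a uniformity statement via the closedness theorem, and finish by a piecewise-linear argument in $\Gamma^2$ --- is the one the paper points to when it defers to the algebraic case in \cite{Now-thm}. So in spirit this is the same proof.

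There is, however, one concrete slip in the way you try to get the uniformity. You write that if uniformity fails then, ``by fiber shrinking (Proposition~\ref{FS}), one would obtain a definable curve inside $A$ along which $G\to 0$''; but Proposition~\ref{FS} \emph{presupposes} an accumulation point $a$ of the set, and at that stage of the argument you have no accumulation point yet --- only a family of non-empty definable sets $\{x\in A:\,v(G(x))\geq N,\ v(f(x))<M\}$ indexed by $N$. Fiber shrinking cannot manufacture the accumulation point; the closedness theorem must come first. The correct order is: note $f,G$ are bounded on $A$ (image closed in $\mathbb{P}^1(K)$ by Corollary~\ref{clo-th-cor-1}, hence cannot accumulate at $\infty$); set $A_M:=\{x\in A:\,v(f(x))<M\}$, which is closed (the condition is clopen) and bounded; apply Corollary~\ref{clo-th-cor-1} to $G|_{A_M}$ to conclude $G(A_M)$ is closed in $K$. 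If the uniformity failed, $0$ would be an accumulation point of $G(A_M)$, hence $0\in G(A_M)$, giving $x_0\in A$ with $G(x_0)=0$ but $f(x_0)\neq 0$ --- contradicting the inclusion hypothesis. No appeal to fiber shrinking is needed there. With this correction the rest of your sketch, in particular the final piecewise-linear step via Lemma~\ref{line-2} (whose bookkeeping you rightly flag as the technical core), goes through as in \cite{Now-thm}.
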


A direct consequence of Theorem~\ref{Loj1} is the following result
on H\"{o}lder continuity of definable functions.

\begin{proposition}\label{Hol}
Let $f: A \to K$ be a continuous $\mathcal{L}$-definable function
on a closed bounded subset $A \subset K^{n}$. Then $f$ is
H\"{o}lder continuous with a positive integer $s$ and a constant
$\beta \in \Gamma$, i.e.\
$$ s \cdot v(f(x) - f(z)) + \beta \geq  v(x-z) $$
for all $x,z \in A$. Equivalently, there is a $C \in |K|$ such
that
$$ | f(x) - f(z) |^{s} \leq C \cdot | x-z | $$
for all $x,z \in A$.
\end{proposition}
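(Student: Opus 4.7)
The plan is to derive Proposition \ref{Hol} directly from Theorem \ref{Loj1} by a standard diagonal-trick, considering the difference $f(x)-f(z)$ as a single definable function on $A\times A$ and the coordinate differences $x_i-z_i$ as the vanishing data.

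More precisely, I would set $B := A\times A \subset K^{2n}$, which is closed and bounded in the $K$-topology because $A$ is, and define the $\mathcal{L}$-definable continuous functions
\[
F : B \to K, \quad F(x,z) := f(x) - f(z),
\]
and, for $i=1,\ldots,n$,
\[
g_i : B \to K, \quad g_i(x,z) := x_i - z_i.
\]
The simultaneous zero locus $\{(x,z)\in B : g_1(x,z)=\cdots=g_n(x,z)=0\}$ is exactly the diagonal $\{(x,x) : x\in A\}$, on which $F$ vanishes identically by its definition. Hence the hypothesis of Theorem \ref{Loj1} is satisfied with $B$ in place of $A$ and with the family $(F,g_1,\ldots,g_n)$.

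Applying Theorem \ref{Loj1} then yields a positive integer $s$ and a $\beta\in\Gamma$ such that
\[
s\cdot v(F(x,z)) + \beta \;\geq\; v\bigl((g_1(x,z),\ldots,g_n(x,z))\bigr)
\]
for all $(x,z)\in B$. Since by definition $v\bigl((g_1(x,z),\ldots,g_n(x,z))\bigr) = v(x-z)$ and $v(F(x,z))=v(f(x)-f(z))$, this is exactly the asserted inequality
\[
s\cdot v(f(x)-f(z)) + \beta \;\geq\; v(x-z),
\]
and the multiplicative reformulation with a constant $C\in|K|$ follows by rewriting valuations as norms.

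There is essentially no obstacle here: the argument is purely formal once the product space and the pair $(F,g_1,\ldots,g_n)$ are set up. The only point that must be verified (and which is immediate) is that $A\times A$ inherits closedness and boundedness from $A$, and that the diagonal is indeed cut out by the $g_i$. All the genuine difficulty has been absorbed into Theorem \ref{Loj1}, whose proof in turn depends on the closedness theorem, orthogonality of auxiliary sorts, and relative quantifier elimination for ordered abelian groups.
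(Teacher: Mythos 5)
Your argument is correct and is precisely the standard diagonal reduction the paper has in mind: the paper states Proposition~\ref{Hol} only as ``a direct consequence of Theorem~\ref{Loj1}'' without spelling out the details, and your application to $B=A\times A$ with $F(x,z)=f(x)-f(z)$ and $g_i(x,z)=x_i-z_i$ is exactly that intended deduction. The only cosmetic point is that Theorem~\ref{Loj1} as printed names both the ambient dimension and the number of $g_i$'s by the same letter $m$; this is just loose notation, and nothing in its proof requires them to coincide, so your use with ambient space $K^{2n}$ and $n$ functions $g_i$ is unproblematic.
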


We immediately obtain

\begin{corollary}
Every continuous $\mathcal{L}$-definable function $f: A \to K$ on
a closed bounded subset $A \subset K^{n}$ is uniformly continuous.
\end{corollary}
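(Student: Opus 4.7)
The plan is to deduce uniform continuity as a direct consequence of the preceding H\"{o}lder estimate (Proposition~\ref{Hol}), without invoking further machinery. The first step is to fix the correct notion of uniform continuity in the valued field setting: $f:A \to K$ is uniformly continuous on $A$ if for every $\gamma \in \Gamma$ there exists $\delta \in \Gamma$ such that, for all $x,z \in A$,
\[
 v(x-z) > \delta \ \Longrightarrow \ v(f(x)-f(z)) > \gamma,
\]
equivalently, for every $\epsilon \in |K^{\times}|$ there exists $\eta \in |K^{\times}|$ with $|x-z|<\eta$ implying $|f(x)-f(z)|<\epsilon$. Uniformity is over the pair $(x,z)$ since $\delta$ (resp.\ $\eta$) depends only on $\gamma$ (resp.\ $\epsilon$), not on the point.

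Next I would apply Proposition~\ref{Hol} to obtain the positive integer $s$ and the constant $\beta \in \Gamma$ furnished by H\"{o}lder continuity, so that
\[
 s \cdot v(f(x)-f(z)) + \beta \geq v(x-z) \quad \text{for all } x,z \in A.
\]
Given an arbitrary $\gamma \in \Gamma$, set $\delta := s\gamma + \beta$. If $v(x-z) > \delta$, then from the H\"{o}lder inequality
\[
 s \cdot v(f(x)-f(z)) \geq v(x-z) - \beta > s\gamma.
\]
Since $\Gamma$ is an ordered abelian group and $s$ is a positive integer, the map $\alpha \mapsto s\alpha$ is strictly order-preserving, whence $v(f(x)-f(z)) > \gamma$. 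This yields the required uniform continuity; the multiplicative translation to $\eta = (\epsilon^{s}/C)$ for the constant $C$ coming from Proposition~\ref{Hol} gives the norm-theoretic formulation.

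This deduction is essentially a one-step calculation, so there is no genuine obstacle; the only point requiring care is the elementary but necessary remark that $\Gamma$, being ordered abelian (hence torsion-free), admits the cancellation $s\alpha > s\gamma \Rightarrow \alpha > \gamma$ for positive integers $s$. The closedness and boundedness of $A$, as well as continuity of $f$, enter only through the hypotheses of Proposition~\ref{Hol} and need not be re-examined here.
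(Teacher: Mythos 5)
Your proof is correct and takes exactly the route the paper intends: the paper states the corollary as an immediate consequence of Proposition~\ref{Hol}, and you simply spell out the one-step deduction (choosing $\delta = s\gamma+\beta$ and using that multiplication by a positive integer is order-preserving on the torsion-free ordered group $\Gamma$). Nothing more is needed.
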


Now we formulate another, more general version of the
\L{}ojasiewicz inequality for continuous definable functions of a
locally closed subset of $K^{n}$.

\begin{theorem}\label{Loj2}
Let $f,g: A \to K$ be two continuous $\mathcal{L}$-definable
functions on a locally closed subset $A$ of $K^{n}$. If
$$ \{ x \in A: g(x)=0 \} \subset \{ x \in A: f(x)=0 \}, $$
then there exist a positive integer $s$ and a continuous
$\mathcal{L}$-definable function $h$ on $A$ such that $f^{s}(x) =
h(x) \cdot g(x)$ for all $x \in A$.
\end{theorem}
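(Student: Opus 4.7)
The witness function is essentially forced: define
$$
h(x) = \begin{cases} f(x)^{s}/g(x) & \text{if } g(x)\neq 0, \\ 0 & \text{if } g(x)=0, \end{cases}
$$
for a positive integer $s$ to be chosen. The identity $f^{s}=h\cdot g$ then holds on all of $A$, since on $Z:=\{x\in A:g(x)=0\}$ both sides vanish by the hypothesis of the theorem. The function $h$ is $\mathcal{L}$-definable for every choice of $s$ and is automatically continuous on the (relatively) open subset $A\setminus Z$, being a quotient of continuous functions with non-vanishing denominator. The entire task is therefore to choose $s$ so that $h(x)\to 0$ as $x\to a$ in $A$ for every $a\in Z$ (note that $f(a)=0$ for each such $a$ by hypothesis).

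\textbf{Local step via Theorem~\ref{Loj1}.} Fix $a\in Z$. Since $A$ is locally closed in $K^{n}$, one can pick a closed ball $\overline{B}:=\overline{B}(a,r)$ for which $A\cap\overline{B}$ is closed in $\overline{B}$, hence closed and bounded. Applying Theorem~\ref{Loj1} with $m=1$ to the restrictions of $f$ and $g$ to $A\cap\overline{B}$ (the containment of zero sets passes to the restriction) produces a positive integer $s_{a}$ and $\beta_{a}\in\Gamma$ with
$$ s_{a}\cdot v(f(x))+\beta_{a}\geq v(g(x))\quad\text{for all } x\in A\cap\overline{B}. $$
For any integer $s>s_{a}$ this yields $v\bigl(f(x)^{s}/g(x)\bigr)\geq (s-s_{a})\cdot v(f(x))-\beta_{a}$ on $(A\cap\overline{B})\setminus Z$. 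Since $f$ is continuous and $f(a)=0$, one has $v(f(x))\to+\infty$ as $x\to a$ in $A$, whence $v(h(x))\to+\infty$, i.e.\ $h(x)\to 0=h(a)$. Thus $h$ is continuous at $a$ as soon as $s>s_{a}$.

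\textbf{Uniformity --- the main obstacle.} The only genuinely non-trivial point is that the local exponent $s_{a}$ a priori depends on $a\in Z$, and $Z$ need not be bounded or closed in $K^{n}$, so that one cannot directly apply Theorem~\ref{Loj1} on all of $A$. To extract a single $s$ valid at every point of $Z$ I would follow the template of the corresponding algebraic result (\cite[Thm.~11.6]{Now-thm}): decompose $A$ via b-minimal cell decomposition (Theorem~\ref{ball}), and combine the term structure of definable functions (Theorem~\ref{term}), the orthogonality of $\widetilde{K}$ and $\Gamma$ inside $RV$, and relative quantifier elimination for ordered abelian groups~\cite{C-H}. These ingredients reduce the required uniform inequality $s\cdot v(f)+\beta\geq v(g)$ on $A$ to a piecewise linear statement in the value-group sort on finitely many cells, from which a global $s$ is obtained as the maximum of the finitely many resulting exponents. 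Alternatively, one may argue by contradiction in an $\aleph_{1}$-saturated elementary extension, using the closedness theorem (Theorem~\ref{clo-th}) to secure a limit point of a purported counterexample inside $Z$ and then appealing to the local step at that point for the desired contradiction. Once a uniform $s$ is in hand, the previous paragraph shows that $h$ is continuous at every point of $Z$, hence on all of $A$, which completes the proof.
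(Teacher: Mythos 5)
Your overall architecture is sound and matches the spirit of what the paper invokes: you reduce the statement to exhibiting a single exponent $s$ for which the explicit candidate $h = f^{s}\cdot(1/g)$ (with the language's convention $1/0=0$, which makes $h$ an $\mathcal{L}$-term) is continuous at every point of $Z=\{g=0\}$, and your local step via Theorem~\ref{Loj1} on $A\cap\overline{B}(a,r)$ is correct. You also correctly identify that all of the difficulty is concentrated in the uniformity of $s$ over the (possibly unbounded, non-closed-in-$K^{n}$) set $Z$.

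The gap is precisely there: neither of your two routes to uniformity is actually an argument. Route~(a) simply lists the same toolbox that the paper names (cell decomposition, term structure, orthogonality of $\widetilde{K}$ and $\Gamma$, relative quantifier elimination for ordered abelian groups) and asserts that they ``reduce the required uniform inequality to a piecewise linear statement on finitely many cells, from which a global $s$ is obtained as the maximum of finitely many exponents.'' That sentence is the entire theorem; the nontrivial content is to show that on each cell the set $\{(v(f(x)),v(g(x))):x\}$ together with the hypothesis $\{g=0\}\subset\{f=0\}$ forces a bound of the form $v(g)\le s\,v(f)+\beta$, and you give no indication of how the hypothesis enters or how unboundedness of $A$ and of the image in $\Gamma^{2}$ is controlled. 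Route~(b) is more problematic than a mere sketch: the closedness theorem in this paper (Theorem~\ref{clo-th}) concerns projections with closed bounded fibers and does not by itself furnish a ``limit point of a purported counterexample inside $Z$'' when $Z$ is unbounded; moreover the type one would need to realize by $\aleph_{1}$-saturation (``for every $s$ and every $\gamma\in\Gamma$, $v(g(x))-s\,v(f(x))>\gamma$'') is not obviously finitely satisfiable without first using exactly the piecewise-linear analysis you are trying to avoid, since $v(f(x))$ can be negative away from $Z$. So route~(b) as stated is not a valid shortcut, and route~(a) is a plan rather than a proof. The paper itself declines to reprove this (it points to the algebraic version in \cite{Now-thm} and says the proof carries over verbatim), so the fair comparison is: your local reduction to Theorem~\ref{Loj1} is a legitimate and somewhat different entry point, but the uniformization step that the cited algebraic proof actually carries out is missing here.
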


Finally, put
$$ \mathcal{D}(f) := \{ x \in A: f(x)
   \neq 0 \} \ \ \text{and} \ \ \mathcal{Z}\, (f) := \{ x \in A: f(x) = 0 \}. $$
The following theorem may be also regarded as a kind of the
\L{}ojasiewicz inequality, which is, of course, a strengthening of
Theorem~\ref{Loj2}.

\begin{theorem}\label{Loj3}
Let $f: A \to K$ be a continuous $\mathcal{L}$-definable function
on a locally closed subset $A$ of $K^{n}$ and $g: \mathcal{D}(f)
\to K$ a continuous $\mathcal{L}$-definable function. Then $f^{s}
\cdot g$ extends, for $s \gg 0$, by zero through the set
$\mathcal{Z}\, (f)$ to a (unique) continuous
$\mathcal{L}$-definable function on $A$.
\end{theorem}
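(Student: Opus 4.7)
Proof plan. Define $F : A \to K$ by $F(x) = f(x)^s g(x)$ on $\mathcal{D}(f)$ and $F(x) = 0$ on $\mathcal{Z}(f)$. Since $F$ is automatically continuous on the open subset $\mathcal{D}(f) \subset A$, and uniqueness plus definability of any such extension are clear from the piecewise definition, the whole proof reduces to showing that, for $s$ sufficiently large,
$$ \lim_{x \to x_0,\, x \in \mathcal{D}(f)}\, f(x)^s g(x) = 0 \quad \text{for every } x_0 \in \mathcal{Z}(f). $$
I would derive this from the uniform \L{}ojasiewicz-type bound:
\begin{quote}
(*) There exists $t \in \mathbb{N}$ such that for every $x_0 \in \mathcal{Z}(f)$ there are a neighborhood $U$ of $x_0$ in $A$ and a constant $C \in \Gamma$ with $v(g(x)) + t \cdot v(f(x)) \geq -C$ for all $x \in U \cap \mathcal{D}(f)$.
\end{quote}
Granting (*), taking any integer $s > t$ gives $v(f(x)^s g(x)) \geq (s-t)\, v(f(x)) - C \to \infty$ as $x \to x_0$, which yields continuity of $F$ at each $x_0 \in \mathcal{Z}(f)$.

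To prove (*) I would combine the closedness theorem with the piecewise linear geometry of $\Gamma$. First, consider the graph $\Gamma_g := \{(x, [g(x) : 1]) : x \in \mathcal{D}(f)\} \subset A \times \mathbb{P}^{1}(K)$ and its closure $\overline{\Gamma}_g$; by Corollary~\ref{clo-th-cor-0} the projection $\overline{\Gamma}_g \to A$ is definably closed, which serves as the compactness substitute needed to control the behavior of $g$ near $\mathcal{Z}(f)$. Next, consider the definable map $\Psi = (v \circ f,\, v \circ g) : \mathcal{D}(f) \to \Gamma \times (\Gamma \cup \{\infty\})$. By relative quantifier elimination for ordered abelian groups together with the orthogonality of $\widetilde{K}$ and $\Gamma$ (Remark~\ref{angular}), the image of $\Psi$ is a definable subset describable purely in the piecewise linear (Presburger) language on $\Gamma$. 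Applying Lemma~\ref{line-2} to this image near the accumulation at $\infty$ in the first coordinate yields affine semi-lines with integer coefficients along which one reads off relations of the form $v(g) \geq -t_L\, v(f) - C_L$; combining these with Proposition~\ref{limit-th1} applied to one-variable restrictions $g \circ \phi$ along definable curves $\phi$ approaching $x_0$ (obtained via the closedness theorem and fiber shrinking, Proposition~\ref{FS}) produces the pointwise-in-$x_0$ linear bound required at each $x_0$.

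The main obstacle is the \emph{uniformity of the exponent $t$ across all $x_0 \in \mathcal{Z}(f)$}. Individual pointwise bounds follow cleanly from the piecewise linear setup, but producing a single $t$ valid simultaneously at every point of $\mathcal{Z}(f)$ requires a definability-based compactness argument: one shows that the collection of "slopes" appearing in the piecewise linear description of $\Psi(\mathcal{D}(f))$ in neighborhoods of $\mathcal{Z}(f)$ is, by b-minimal cell decomposition (Theorem~\ref{ball}) together with the Presburger structure on $\Gamma$, a uniformly definable and hence globally finite set of rationals, whereupon one takes $t$ exceeding all of them. This is the delicate step where the closedness theorem, b-minimality, and relative quantifier elimination for ordered abelian groups must be knit together; once it is in place, the remainder of the argument is a verbatim transcription of the algebraic version in \cite[\S11]{Now-thm}, assembling (*) and hence the theorem.
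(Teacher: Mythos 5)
Note first that the paper gives no proof of Theorem~\ref{Loj3}: Section~6 explicitly states that the \L{}ojasiewicz results are only stated, the proofs being carried over ``almost verbatim'' from \cite{Now-thm}, and lists as the main ingredients the closedness theorem, elimination of valued-field quantifiers, orthogonality of the auxiliary sorts, and relative quantifier elimination for ordered abelian groups. There is thus no in-paper argument to compare against; what can be assessed is whether your sketch is consistent with that indicated route and whether it is sound.

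Your reduction of the theorem to the local estimate (*) is correct, and your toolkit matches the one the paper names. The gap is exactly in the step you flag as ``delicate.'' You cannot extract a uniform exponent $t$ by reading off the slopes in the Presburger description of the \emph{global} image $\Psi(\mathcal{D}(f))\subset\Gamma^{2}$. For a locally closed $A$, the function $g$ may be unbounded near frontier points of $A$ that have nothing to do with $\mathcal{Z}(f)$ (e.g.\ $A=K^{\circ}\setminus\{1\}$, $f(x)=x$, $g(x)=1/(x(x-1))$); then for fixed $\alpha$ the fiber $\{\beta:(\alpha,\beta)\in\Psi(\mathcal{D}(f))\}$ is unbounded below, and the ``uniformly definable, hence finite, set of slopes'' you invoke simply does not exist for the global image. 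One must first localize the Presburger analysis near $\mathcal{Z}(f)$, say via a parametrized definable family over $\mathcal{Z}(f)$ (or over the distance to $\mathcal{Z}(f)$), before finiteness of slopes can be extracted. Also, Lemma~\ref{line-2} does not do what you ask of it: it produces one affine semi-line \emph{contained in} a set $P$ accumulating at $\infty$, not a linear lower envelope for $P$; the latter is what a Presburger cell decomposition supplies, and that is the tool actually needed here. Finally, the paragraph interleaving fiber shrinking, Proposition~\ref{limit-th1} and the Presburger argument runs two different strategies (a one-variable/curve reduction and a cell-decomposition reduction) without completing either. The overall direction is right and aligned with what the paper indicates, but as written the sketch leaves the crucial uniformity unproven and misattributes to Lemma~\ref{line-2} a conclusion it does not deliver.
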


\vspace{1ex}

\section{Curve selection}

Consider a Henselian field $K$ with a separated analytic
$\mathcal{A}$-structure. In this section, we establish a general
version of curve selection for $\mathcal{L}$-definable sets. Note
that the domain of the selected curve is, unlike in the classical
version, only an $\mathcal{L}$-definable subset of the unit disk.

\begin{proposition}\label{GCSL}
Let $A$ be an $\mathcal{L}$-definable subset of $K^{p}$. If a
point $a \in K^{p}$ lies in the closure (in the $K$-topology)
$\mathrm{cl}\, (A \setminus \{ a \})$ of $A \setminus \{ a \}$,
then there exist an $\mathcal{L}$-definable map $\varphi :
K^{\circ} \longrightarrow K^{p}$ given by power series from
$A_{p}^{loc}(K)$, and an $\mathcal{L}$-definable subset $E$ of
$K^{\circ}$ with accumulation point $0$ such that
$$ \varphi(0)=a \ \ \ {\text and} \ \ \  \varphi( E
   \setminus \{ 0 \}) \subset A \setminus \{ a \}. $$
\end{proposition}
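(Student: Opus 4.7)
The plan is to reduce to a statement about monomial definable sets on a normal-crossing model, and then to invoke a piecewise-linear selection on the value-group side, very much in parallel with how fiber shrinking was deduced in the algebraic case. We may assume $a=0$, and, by passing to an elementary extension (cf.~Remark~\ref{angular}), we may assume that $K$ admits an angular component map, so that $RV^{*}(K) \simeq \widetilde{K}^{\times} \times \Gamma$ splits.

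First I would give a local description of $A$. By elimination of valued field quantifiers and the term structure theorem (Theorems~\ref{ball} and~\ref{term}), after shrinking to a polydisk $\Delta_{p}(r)$, $A$ is a Boolean combination of sets of the form $\{x \in \Delta_{p}(r): \mathrm{rv}(f_{j}(x)) \in S_{j}\}$, where $f_{1},\ldots,f_{k} \in A_{p}^{\mathrm{loc}}(K)$ and $S_{j} \subset RV(K)$ is definable in the auxiliary sorts. Using the orthogonality above, each such condition decomposes into conditions on $v(f_{j}(x)) \in \Gamma$ and on $\overline{ac}(f_{j}(x)) \in \widetilde{K}$.

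Next I would apply Corollary~\ref{cor-NC} to the family $f_{1},\ldots,f_{k}$, producing a finite composite of blow-ups $\pi_{0}:\widetilde{X}_{0}\to \Delta_{p}(r)$ along smooth centers such that $f_{i}\circ \pi_{0} = u_{i}\cdot y^{\alpha_{i}}$ in suitable local analytic coordinates near any point of $\widetilde{X}_{0}$, with $u_{i}$ an analytic unit. Since $\pi_{0}$ is definably closed (closedness theorem), and since $0 \in \mathrm{cl}(A\setminus\{0\})$, some $\tilde{a}\in \pi_{0}^{-1}(0)$ lies in $\mathrm{cl}(\pi_{0}^{-1}(A\setminus\{0\}))$. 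Pick local analytic coordinates $y=(y_{1},\ldots,y_{p})$ at $\tilde{a}$ with $y(\tilde{a})=0$; in these coordinates the conditions defining $\pi_{0}^{-1}(A)$ translate into piecewise-linear conditions on $(v(y_{1}),\ldots,v(y_{p})) \in \Gamma^{p}$ (coming from $v(f_{i}\circ\pi_{0}) = \langle \alpha_{i}, v(y)\rangle + v(u_{i})$, with $v(u_{i})$ locally constant near $\tilde{a}$) together with conditions on $(\overline{ac}(y_{1}),\ldots,\overline{ac}(y_{p}))\in \widetilde{K}^{p}$.

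Then I would run piecewise-linear curve selection. Because $\tilde{a}$ is an accumulation point of $\pi_{0}^{-1}(A\setminus\{0\})$, the PL projection $P \subset \Gamma^{p}$ has $(\infty,\ldots,\infty)$ as an accumulation point, so Lemma~\ref{line-1} yields an affine semi-line with integer direction $(r_{1},\ldots,r_{p})$ through some $\gamma \in P$ on which $(\infty,\ldots,\infty)$ is still an accumulation point of the intersection with $P$. Orthogonality lets me, after passing to a nonempty definable piece of the residue-field trace, fix residues $c_{j}\in \widetilde{K}^{\times}$ compatible with $\gamma$, and choose lifts $\tilde{c}_{j}\in K^{\times}$ with $v(\tilde{c}_{j}) = \gamma_{j}$ and $\overline{ac}(\tilde{c}_{j})=c_{j}$. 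Define the analytic arc $\psi:K^{\circ}\to \widetilde{X}_{0}$ in the chosen local coordinates by $\psi(t) = (\tilde{c}_{1}t^{r_{1}},\ldots,\tilde{c}_{p}t^{r_{p}})$, and set $\varphi := \pi_{0}\circ \psi$. Since $\pi_{0}$ is expressible locally via power series from the Weierstrass system and $\psi$ is monomial, $\varphi$ has components in the appropriate ring of analytic germs, $\varphi(0)=a=0$, and for the $\mathcal{L}$-definable set $E\subset K^{\circ}$ consisting of those $t$ whose $v(t)$ varies along the ray (with $0$ as accumulation point) and whose $\overline{ac}(t)$ satisfies the residual condition pulled back from the description of $A$, one has $\varphi(E\setminus\{0\}) \subset A \setminus \{0\}$.

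The main obstacle is the third step: one must certify that a single integer direction in $\Gamma^{p}$ can be chosen to satisfy simultaneously all the valuation-side inequalities imposed by the Boolean combination defining $\widetilde{A}$ near $\tilde{a}$, and that the residue-field conditions can be realized on a nonempty definable set of $\overline{ac}(t)$'s. This is exactly what orthogonality of sorts (so the $\Gamma$- and $\widetilde{K}$-components can be handled separately) plus the PL line lemma (Lemma~\ref{line-1}, i.e.\ relative quantifier elimination for ordered abelian groups) are designed to provide; the normal-crossing reduction is what turns the originally nonlinear analytic description into a PL one to which these tools apply.
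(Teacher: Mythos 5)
There is a genuine gap in your reduction step. After elimination of valued field quantifiers, the set $A$ is described by conditions on $v(t_i(x))$ and $\overline{ac}\,\tau_j(x)$ where the $t_i,\tau_j$ are \emph{terms} of the analytic language, not analytic germs in $A_{p}^{\mathrm{loc}}(K)$. Terms are finite composites built from the Weierstrass functions \emph{and} the reciprocal function $1/x$ (and, in $\mathcal{L}^{*}$, the Henselian functions $h_{m}$), so a typical term looks like $h(f_{1}/g_{1},\ldots,f_{k}/g_{k})$ with $f_i,g_i$ analytic near $a$ and $h$ another analytic (or Henselian) function. Such a composite is generally \emph{not} an element of $A_{p}^{\mathrm{loc}}(K)$, because of the reciprocals and the nesting. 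Your proposal replaces the $t_j$ by germs $f_j \in A_{p}^{\mathrm{loc}}(K)$ in a single stroke and then applies Corollary~\ref{cor-NC} once to that family; this collapses an induction into one step that is not justified. The paper's argument is structured precisely around this obstacle: it works with the local notion of term (``a finite composite of functions analytic near a given point and the reciprocal function $1/x$''), performs an induction on term complexity, and uses Observations~1--3 to lower complexity one layer at a time. Observation~2 ensures that already-normalized terms stay normal crossings when new functions are resolved in, and Observation~3 handles precisely the nested form $h(f_{1}/g_{1},\ldots,f_{k}/g_{k})$ by first making the quotients $f_i/g_i$ monomial (or reciprocal-monomial) via the divisibility-ordered normal-crossing resolution, so that the outer function $h$ can then be analyzed in turn.

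Once the reduction is repaired, the remainder of your argument — obtaining a monomial/PL description in the $y$-coordinates near $\tilde{a}$, splitting off the $\Gamma$- and $\widetilde{K}$-parts by orthogonality, selecting a semi-line with integer slopes via Lemma~\ref{line-1}, and lifting to a monomial arc composed with $\pi_{0}$ — tracks the paper's proof closely and is essentially correct. One further minor point: your Observation-1-style step correctly notes that the closedness theorem gives $\tilde{a}\in\pi_{0}^{-1}(a)$ in the closure of $\pi_{0}^{-1}(A\setminus\{a\})$, but you should also record, as the paper does, that curve selection for the lifted couple $(B,\tilde{a})$ implies curve selection for the original $(A,a)$ because $\varphi=\pi_{0}\circ\psi$ is still given by power series from $A_{p}^{\mathrm{loc}}(K)$ and carries $E\setminus\{0\}$ into $A\setminus\{a\}$.
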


\begin{proof}
We call the problem under study curve selection for the couple
$(A,a)$. We may assume without loss of generality that $a=0 \in
K^{p}$. By elimination of valued field quantifiers, the set $A
\setminus \{ a \}$ is a finite union of sets defined by conditions
of the form
$$ (v(t_{1}(x)),\ldots,v(t_{r}(x))) \in P, \ \
   (\overline{ac}\, \tau_{1}(x),\ldots,\overline{ac}\, \tau_{s}(x)) \in Q,
$$
where $t_{i},\tau_{j}$ are terms of the separated analytic
structure $\mathcal{A}(K)$, and $P$ and $Q$ are definable subsets
of $\Gamma^{r}$ and $\Bbbk^{s}$, respectively. Here it is
convenient to deal with a local concept of term, i.e.\ a finite
composite of functions analytic near a given point (in some local
analytic coordinates) and the reciprocal function $1/x$.

\vspace{1ex}

One can, of course, assume that $A$ is just a set of this form. We
shall proceed with induction on the complexity of these terms. Its
lowering is possible via successive transformations to a normal
crossing by means of Corollary~\ref{cor-NC} and the three
straightforward observations below.

\vspace{2ex}

{\bf Observation 1.}
Consider a finite composite of blow-ups
$$ \pi_{0}: \widetilde{X}_{0} \to \Delta_{p}(r) $$
from Corollary~\ref{cor-NC} and put $B := \pi_{0}^{-1}(A \setminus
\{ a \})$. Since $\pi_{0}$ is a surjective, definably closed map
by Corollary~\ref{clo-th-cor-0} to the closedness theorem, there
is a point $b \in \mathrm{cl}\, (B) \setminus B$ such that
$\pi_{0}(b) =a$. Clearly, if the couple $(B,b)$ satisfies the
conclusion of Proposition~\ref{GCSL}, so does the couple $(A,a)$.

\vspace{2ex}

{\bf Observation 2.}
Suppose that a finite number of $\mathcal{L}$-terms $t_{i}$,
$i=1,\ldots,l$, have been already transformed to normal crossing
divisors with respect to some local analytic coordinates
$x=(x_{1},\ldots,x_{p})$ near a point $a$. Next consider a finite
number of other functions $f_{j}$, $j=1,\ldots,q$, analytic near
$a$. After simultaneous transformation of the functions $f_{j}$
and the coordinates $x_{k}$ to a normal crossing (possibly taking
into account divisibility relation), all the terms $t_{i}$ and
functions $f_{j}$ become normal crossing divisors (along the fiber
over the point $a$).

\vspace{2ex}

{\bf Observation 3.} Let $t(x)$ be a term of the form
$$ h(f_{1}/g_{1}(x),\ldots,f_{k}/g_{k}(x)), $$
where $f_{i},g_{i}$ are analytic functions near a point $a = 0 \in
K^{p}$ and $h$ is the interpretation of a function of the language
$\mathcal{L}$ (extended by zero off its natural domain). Consider
simultaneous transformation to a normal crossing of the functions
$f_{i}$, $g_{i}$ which takes into account linear ordering with
respect to divisibility relation and a point $b$ such that
$\pi_{0}(b)=a$. Then we can assume without loss of generality that
the quotients $f_{i}/g_{i} \circ \pi_{0}$ are normal crossing
divisors at $b$. Otherwise the term $t \circ \pi_{0}$ would vanish
near $b$, and then we would pass to a term of lower complexity.

\vspace{2ex}

Now it is not difficult to reduce curve selection for the initial
couple $(A,a)$ to curve selection for a couple $(B,b)$ where $B$
is a set defined by conditions of the form
$$ (v(t_{1}(y)),\ldots,v(t_{r}(y))) \in P, \ \
   (\overline{ac}\, \tau_{1}(y),\ldots,\overline{ac}\, \tau_{s}(y)) \in Q,
$$
where $y$ are suitable local analytic coordinates near $b$, each
of the $t_{i},\tau_{j}$ is either a normal crossing $u(y) \cdot
y^{\alpha}$ at $b$, or a reciprocal normal crossing $u(y) \cdot
1/y^{\alpha}$ at $b$, where $u(b) \neq 0$ and $\alpha \in
\mathbb{N}^{p}$, or vanishes near $b$.

\vspace{1ex}

Since the valuation map and the angular component map composed
with a continuous function are locally constant near any point at
which this function does not vanish, the conditions which describe
the set $B$ near $b$ can be easily expressed in the form
$$ (v(y_{1}),\ldots,v(y_{p})) \in \widetilde{P}, \ \
   (\overline{ac}\, y_{1},\ldots,\overline{ac}\, y_{p}) \in
   \widetilde{Q}, $$
where $\widetilde{P}$ and $\widetilde{Q}$ are definable subsets of
$\Gamma^{p}$ and $\widetilde{K}^{p}$, respectively.

\vspace{1ex}

We thus achieved the same reduction as in the algebraic case
studied in our papers~\cite{Now-Sel,Now-thm}. In this manner, we
can repeat verbatim the remaining part of the proof given in those
papers. The main ingredient of the further reasoning is
Lemma~\ref{line-1} (\cite[Lemma~6.2]{Now-thm}) which, in turn,
relies on relative quantifier elimination for ordered abelian
groups.
\end{proof}

We conclude the paper with the following comment.

\begin{remark}
In the recent paper~\cite{Now-TU}, we proved that every closed
definable subset of $K^{n}$ is a definable retract of $K^{n}$.
Consequently, we established a non-Archimedean version of the
Tietze--Urysohn theorem on extending continuous functions
definable over Henselian valued fields of equicharacteristic zero.
It is very plausible that these results will also hold over
Henselian valued fields with analytic structure.
\end{remark}

\vspace{1ex}

\vspace{3ex}

\begin{small}
Institute of Mathematics

Faculty of Mathematics and Computer Science

Jagiellonian University


ul.~Profesora S.\ \L{}ojasiewicza 6

30-348 Krak\'{o}w, Poland

{\em E-mail address: nowak@im.uj.edu.pl}
\end{small}

\end{document}